\theoremstyle{plain}
\newtheorem{thm}{Theorem}[section]
\newtheorem{lem}{Lemma}[section]
\newtheorem{prop}{Proposition}[section]
\newtheorem{cor}{Corollary}[section]
\theoremstyle{definition}
\newtheorem{exmp}{Example}[section]
\newtheorem*{rem}{Remark}
\renewenvironment{proof}[1][Proof.]{\begin{trivlist}
\item[\hskip \labelsep {\bfseries #1}]}{\end{trivlist}}
\DeclareMathOperator{\im}{Im}
\DeclareMathOperator*{\Res}{Res}
\newcommand{\const}{\mathrm{const}}
\begin{document}

\title[Inverse problems with parameter in the boundary condition]{Inverse eigenvalue problems for Sturm--Liouville equations with spectral parameter linearly contained in one of the boundary conditions}
\author{Namig J Guliyev}
\address{Institute of Mathematics and Mechanics, National Academy of Sciences of Azerbaijan, 9 F.Agayev str., AZ1141, Baku, Azerbaijan}
\ead{njguliyev@yahoo.com}

\begin{abstract}
  Inverse problems of recovering the coefficients of Sturm--Liouville problems with the eigenvalue parameter linearly contained in one of the boundary conditions are studied:
\begin{enumerate}[1) ]
\item from the sequences of eigenvalues and norming constants;
\item from two spectra.
\end{enumerate}
Necessary and sufficient conditions for the solvability of these inverse problems are obtained.
\end{abstract}

\section{Introduction}

In this paper we consider inverse eigenvalue problems for the equation
\begin{equation}
\label{eq:main}
  \ell y := -y''(x) + q(x)y(x) = \lambda y(x), \quad x \in [0,\pi]
\end{equation}
with the boundary conditions
\begin{eqnarray}
  y'(0) - hy(0) = 0, \label{eq:boundary1} \\
  \lambda (y'(\pi) + H y(\pi)) = H_1 y'(\pi) + H_2 y(\pi), \label{eq:boundary2}
\end{eqnarray}
where $q(x) \in \mathscr{L}_2(0,\pi)$ is a real-valued function, $h, H, H_1, H_2 \in \mathbb{R}$ and
\begin{equation}
\label{eq:def:rho}
  \rho := H H_1 - H_2 > 0.
\end{equation}
Let us denote this problem by $\mathcal{P}(q,h,H,H_1,H_2)$.

Problems with the eigenvalue parameter linearly contained in the boundary conditions have been studied extensively. In~\cite{PRRSEA_77_1977__293-308,MATHZ_133_1973__301-312} an operator-theoretic formulation of the problems of the form~(\ref{eq:main})--(\ref{eq:boundary2}) has been given. It has been shown that one can associate a self-adjoint operator in adequate Hilbert space with such problems whenever the condition~(\ref{eq:def:rho}) holds. Oscillation and comparison results have been obtained in~\cite{PRRSEA_127_1997_6_1123-1136,PROEM2_37_1993_1_57-72,DIFEQ_35_1999_8_1031-1034}. Basis properties and eigenfunction expansions have been considered in~\cite{DIFEQ_33_1997_1_116-120,DIFEQ_37_2001_12_1677-1683,SIBMJ_44_2003_5_813-816,CZEMJ_32(107)_1982__608-622}. Problems with various singularities have been analyzed in~\cite{ACTMH_102_2004_1-2_159-175,PRRSEA_87_1980__1-34}. In the case $\rho < 0$ the problem~(\ref{eq:main})--(\ref{eq:boundary2}) can be associated with a self-adjoint operator in Pontryagin space and not all eigenvalues are necessarily real (see~\cite{MATHN3_60_1996_4_456-458,MATHN3_66_1999_1-2_127-134,PRRSEA_125_1995_6_1205-1218}).

Inverse problems involving linear dependence on the spectral parameter in the boundary conditions have also been investigated. In~\cite{IZVAN__1982_2_15-22} sufficient conditions for two sequences of real numbers to be the spectra of the problems $\mathcal{P}(q,h,0,H_1,H_2)$ and $\mathcal{P}(q,h,0,\widetilde{H_1},\widetilde{H_2})$ are provided, where $H_1 \widetilde{H_2} = \widetilde{H_1} H_2$, $H_2, \widetilde{H_2} > 0$ and $H_1 \ne \widetilde{H_1}$. Various uniqueness theorems are proved in~\cite{JLONM2_62_2000_1_161-182,INVPR_13_1997__1453-1462,NUMFA_24_2003_1-2_85-105,Yurko2002}. We shall provide in this paper another proof of the unique solvability. Numerical techniques are discussed in~\cite{NUMFA_24_2003_1-2_85-105}. In~\cite{MATMA_26_2003_4_349-357,BULLM_33_2001_6_749-757} so called ''almost isospectral'' transformations (i.e., transformations preserving all but finitely many eigenvalues) are studied and using these transformations many direct and inverse results for problems with the spectral parameter in one of the boundary conditions are derived from those for classical Sturm--Liouville problems.

The present paper is devoted to the study of inverse problems by (i) one spectrum and a sequence of norming constants; (ii) two spectra. We obtain necessary conditions for eigenvalues and norming constants in Section~\ref{sec:preliminaries}. In Section~\ref{sec:main_equation} we prove that the kernel of the operator transforming the function $\cos \sqrt{\lambda}x$ to the corresponding solution of the equation~(\ref{eq:main}) satisfies the Gel'fand--Levitan--Marchenko type integral equation.
In Section~\ref{sec:uniqueness} we show that the boundary-value problem~(\ref{eq:main})--(\ref{eq:boundary2}) can be uniquely determined from its spectrum and norming constants.
Reconstruction of the coefficients of the problem from these spectral characteristics is realized in Section~\ref{sec:reconstruction1} using the method analogous to that of Gel'fand and Levitan~\cite{AMEMS2_1_1955__253-304} (see also~\cite{Marchenko77,Yurko2001}).

Sections~\ref{sec:two_spectra} and~\ref{sec:reconstruction2} are devoted to the study of inverse problems by two spectra. In Section~\ref{sec:two_spectra} we consider the problems $\mathcal{P}(q,h,H,H_1,H_2)$ and $\mathcal{P}(q,\widetilde{h},H,H_1,H_2)$ with $h \ne \widetilde{h}$. It's proved that the eigenvalues of two such problems interlace and norming constants of first problem are expressed by these eigenvalues. We use these expressions in Section~\ref{sec:reconstruction2} to solve the inverse problem by two spectra, similarly to the work of Gasymov and Levitan~\cite{USPMN_19_1964_2(116)_3-63} for the classical Sturm--Liouville problems.

\section{Preliminaries}
\label{sec:preliminaries}

Let $\varphi(x, \lambda)$ and $\psi(x, \lambda)$ be the solutions of~(\ref{eq:main}) satisfying the initial conditions
\begin{equation}
\label{eq:phi_psi}
  \fl \varphi(0, \lambda) = 1, \quad \varphi'(0, \lambda) = h, \quad \psi(\pi, \lambda) = -\lambda + H_1, \quad \psi'(\pi, \lambda) = \lambda H - H_2.
\end{equation}
We define
\begin{equation*}
  \chi(\lambda) := \varphi(x, \lambda) \psi'(x, \lambda) - \varphi'(x, \lambda) \psi(x, \lambda),
\end{equation*}
which is independent of $x \in [0,\pi]$. The function $\chi(\lambda)$ is entire and has zeros at the eigenvalues of the problem~(\ref{eq:main})--(\ref{eq:boundary2}). The set of eigenvalues is countable, consists of real numbers and for each eigenvalue $\lambda_n$ there exists such a number $k_n$ that
\begin{equation}
\label{eq:def:k_n}
  \psi(x, \lambda_n) = k_n \varphi(x, \lambda_n), \quad k_n \ne 0.
\end{equation}

In the Hilbert space $\mathcal{H} = \mathscr{L}_2(0,\pi) \oplus \mathbb{C}$ let an inner product be defined by
\begin{equation*}
  (F,G) := \int_0^{\pi} F_1(x)\overline{G_1(x)}\rmd x + \frac{1}{\rho}F_2\overline{G_2}
\end{equation*}
for
\begin{equation*}
  F = \left( {F_1(x) \atop F_2} \right), \quad G = \left( {G_1(x) \atop G_2} \right) \in \mathcal{H}.
\end{equation*}
We define operator (see~\cite{PRRSEA_77_1977__293-308})
\begin{equation*}
  A(F) := \left( { -F''_1(x) +q(x)F_1(x) \atop H_1 F_1'(\pi) + H_2 F_1(\pi) } \right)
\end{equation*}
with
\begin{eqnarray*}
  \fl D(A) = \left\{F \in \mathcal{H} | F_1(x), F'_1(x) \in \mathscr{AC}[0,\pi], \ell F_1 \in \mathscr{L}_2(0,\pi), F'_1(0) - h F_1(0) = 0, \right. \nonumber \\
  \left. F_2 = F_1'(\pi) + H F_1(\pi) \right\}.
\end{eqnarray*}
Then
\begin{equation*}
  \Phi_n := \left( { \varphi(x, \lambda_n) \atop \varphi'(\pi, \lambda_n) + H \varphi(\pi, \lambda_n) } \right)
\end{equation*}
are orthogonal eigenelements of $A$:
\begin{equation*}
  (\Phi_n, \Phi_m) = 0, \quad n \ne m.
\end{equation*}
We also define \emph{norming constants} by
\begin{equation*}
  \gamma_n := \| \Phi_n \| ^2 = \int_0^{\pi} \varphi^2(x, \lambda_n)\rmd x + \frac{\left(\varphi'(\pi, \lambda_n) + H \varphi(\pi, \lambda_n)\right)^2}{\rho}.
\end{equation*}
The numbers $\{\lambda_n, \gamma_n\}_{n \ge 0}$ are called the \emph{spectral data} of the problem~(\ref{eq:main})--(\ref{eq:boundary2}).

\begin{lem}
  The following equality holds:
\begin{equation} \label{eq:dot_chi}
  \dot{\chi}(\lambda_n) = k_n \gamma_n,
\end{equation}
where $\dot{\chi}(\lambda) = \frac{d}{d\lambda} \chi(\lambda)$.
\end{lem}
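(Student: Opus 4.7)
My plan is to apply the Lagrange identity to the pair $\varphi(\cdot,\lambda)$, $\psi(\cdot,\lambda_n)$, pass to the limit $\lambda\to\lambda_n$, and reconcile the boundary contribution at $x=\pi$ with the ``extra'' summand in $\gamma_n$ forced by the spectral parameter in~(\ref{eq:boundary2}).

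The preparatory observation is that since $\chi(\lambda)$ is $x$-independent, its value at $x=\pi$ can be read off from~(\ref{eq:phi_psi}):
\begin{equation*}
  \chi(\lambda) = \lambda\bigl(\varphi'(\pi,\lambda)+H\varphi(\pi,\lambda)\bigr) - H_1\varphi'(\pi,\lambda) - H_2\varphi(\pi,\lambda).
\end{equation*}
Setting $\lambda=\lambda_n$, using $\chi(\lambda_n)=0$, and substituting $\psi(\pi,\lambda_n)=k_n\varphi(\pi,\lambda_n)$ and $\psi'(\pi,\lambda_n)=k_n\varphi'(\pi,\lambda_n)$ from~(\ref{eq:def:k_n}) and~(\ref{eq:phi_psi}), I get the key algebraic identity
\begin{equation*}
  k_n\bigl(\varphi'(\pi,\lambda_n)+H\varphi(\pi,\lambda_n)\bigr) = HH_1-H_2 = \rho.
\end{equation*}

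Next, I would integrate the Lagrange identity $(\varphi\psi'-\varphi'\psi)'=(\lambda-\mu)\varphi\psi$ over $[0,\pi]$, evaluating the boundary Wronskian at $x=0$ (where the initial conditions for $\varphi$ make it equal to $\chi(\mu)$) and at $x=\pi$ (using~(\ref{eq:phi_psi}) for $\psi$ together with the expression for $\chi(\lambda)$ above). Specializing $\mu=\lambda_n$, invoking~(\ref{eq:def:k_n}) and $\chi(\lambda_n)=0$, dividing by $\lambda-\lambda_n$, and letting $\lambda\to\lambda_n$, the identity collapses to
\begin{equation*}
  \dot\chi(\lambda_n) = k_n\int_0^\pi\varphi^2(x,\lambda_n)\rmd x + \varphi'(\pi,\lambda_n)+H\varphi(\pi,\lambda_n).
\end{equation*}

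To conclude, the algebraic identity above lets me rewrite the final summand as $k_n(\varphi'(\pi,\lambda_n)+H\varphi(\pi,\lambda_n))^2/\rho$, whereupon the right-hand side is exactly $k_n\gamma_n$. The only non-routine ingredient is the identity $k_n(\varphi'(\pi,\lambda_n)+H\varphi(\pi,\lambda_n))=\rho$: it is what converts the boundary term forced on us by the $\lambda$-dependent condition~(\ref{eq:boundary2}) into precisely the contribution of the second component of $\Phi_n$ to $\|\Phi_n\|^2$, so that the resulting identity assembles into $\dot\chi(\lambda_n)=k_n\gamma_n$ rather than the classical ``$\mathscr{L}_2$-norm only'' version. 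Everything else is bookkeeping around the standard integration-by-parts argument.
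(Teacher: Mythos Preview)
Your argument is correct and is essentially the paper's own proof: both apply the Lagrange identity to the pair $\varphi,\psi$ with one spectral parameter fixed at $\lambda_n$, evaluate the boundary Wronskians via~(\ref{eq:phi_psi}), divide by $\lambda-\lambda_n$, and let $\lambda\to\lambda_n$; the identity $k_n(\varphi'(\pi,\lambda_n)+H\varphi(\pi,\lambda_n))=\rho$ that you highlight is exactly what the paper uses (implicitly) to produce the term $\rho/k_n$. The only cosmetic difference is that the paper lets $\lambda$ vary in $\psi$ while you let it vary in $\varphi$, which changes nothing of substance.
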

\begin{proof}
Using~(\ref{eq:phi_psi}) and~(\ref{eq:def:k_n}) in the equality
\begin{equation*}
  \fl (\lambda - \lambda_n) \int_0^{\pi} \psi(x, \lambda) \varphi(x, \lambda_n) \rmd x = \left. (\psi(x, \lambda) \varphi'(x, \lambda_n) - \psi'(x, \lambda) \varphi(x, \lambda_n)) \right|_0^{\pi}
\end{equation*}
we obtain:
\begin{equation*}
  \frac{\chi(\lambda)}{\lambda - \lambda_n} = \int_0^{\pi} \psi(x, \lambda) \varphi(x, \lambda_n) \rmd x + \frac{\rho}{k_n}.
\end{equation*}
As $\lambda \to \lambda_n$ this equality leads to~(\ref{eq:dot_chi}).
\end{proof}
\begin{rem}
  Simplicity of the eigenvalues of~(\ref{eq:main})--(\ref{eq:boundary2}) also follows from this lemma.
\end{rem}
\begin{thm} \label{thm:asymptotics}
  Following asymptotics hold:
\begin{equation}
\label{eq:asymptotics:s_n}
  s_n := \sqrt{\lambda_n} = n - 1 + \frac{\omega}{n \pi} + \frac{\zeta_n}{n}, \quad \{\zeta_n\} \in l_2,
\end{equation}
\begin{equation}
\label{eq:asymptotics:gamma_n}
  \gamma_n = \frac{\pi}{2} + \frac{\zeta'_n}{n}, \quad \{\zeta'_n\} \in l_2,
\end{equation}
where
\begin{equation*}
  \omega = h + H + \frac{1}{2} \int_0^{\pi} q(x)\rmd x.
\end{equation*}
\end{thm}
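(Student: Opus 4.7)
The plan is to carry out a standard Levinson-type analysis based on the Gel'fand--Levitan transformation operator. Recall that for $q \in \mathscr{L}_2(0,\pi)$ there exists a kernel $K(x,t)$, continuous on $\{0 \le t \le x \le \pi\}$ with $K(x,x) = h + \tfrac{1}{2}\int_0^x q(\tau)\,\rmd\tau$, such that
\begin{equation*}
  \varphi(x,\lambda) = \cos(sx) + \int_0^x K(x,t)\cos(st)\,\rmd t, \qquad s := \sqrt{\lambda}.
\end{equation*}
Integrating by parts in this formula and in its $x$-derivative, evaluated at $x=\pi$, yields
\begin{equation*}
  \varphi(\pi,\lambda) = \cos(s\pi) + \frac{\alpha(s)}{s}, \quad \varphi'(\pi,\lambda) = -s\sin(s\pi) + (\omega - H)\cos(s\pi) + \frac{\beta(s)}{s},
\end{equation*}
where $\alpha,\beta$ are bounded on the real line and of Fourier-coefficient type: for any real sequence $\{s_n\}$ with $\sup_n|s_n - n + 1| < \infty$, the values $\alpha(s_n), \beta(s_n)$ form $\ell_2$ sequences by the Riesz-basis property of $\{\cos(s_n t)\}$ in $\mathscr{L}_2(0,\pi)$.

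Substituting into $\chi(\lambda) = \lambda(\varphi'(\pi,\lambda)+H\varphi(\pi,\lambda)) - H_1\varphi'(\pi,\lambda) - H_2\varphi(\pi,\lambda)$ produces
\begin{equation*}
  \chi(\lambda) = -s^3\sin(s\pi) + \omega\, s^2\cos(s\pi) + s\,\widetilde\alpha(s),
\end{equation*}
with $\widetilde\alpha$ again bounded and of Fourier type on the real axis. Rouch\'e's theorem applied on the circles $|s| = N + \tfrac{1}{2}$ for large integer $N$ then shows that $\chi$ has exactly one simple zero near each integer $n\ge 1$, so after matching enumerations $s_n = n - 1 + \delta_n$ with $\delta_n \to 0$. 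Inserting this into $\chi(\lambda_n) = 0$ and expanding $\sin(\pi\delta_n) = \pi\delta_n + O(\delta_n^3)$, $\cos(\pi\delta_n) = 1 + O(\delta_n^2)$ reduces the equation at leading order to $(n-1)\pi\delta_n = \omega + r_n$, where $r_n$ is built from $\widetilde\alpha(s_n)/s_n$ plus lower-order pieces, so $\{n r_n\}\in\ell_2$ by the Riesz-basis fact above. Solving for $\delta_n$ gives~\eqref{eq:asymptotics:s_n}.

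For the norming constants I would substitute the same transformation formula into
\begin{equation*}
  \gamma_n = \int_0^\pi \varphi^2(x,\lambda_n)\,\rmd x + \frac{1}{\rho}\bigl(\varphi'(\pi,\lambda_n)+H\varphi(\pi,\lambda_n)\bigr)^2.
\end{equation*}
The leading piece $\int_0^\pi \cos^2(s_n x)\,\rmd x = \pi/2 + \sin(2\pi s_n)/(4 s_n)$ is $\pi/2 + \ell_2/n$ once the asymptotics of $s_n$ are plugged into the sine. The remaining cross and quadratic kernel terms are of the form $s_n^{-1}\int_0^\pi g(t)\cos(s_n t)\,\rmd t$ with $g \in \mathscr{L}_2(0,\pi)$, hence $\ell_2/n$; and the boundary piece is $O(1/n^2)$, because $\chi(\lambda_n) = 0$ together with the asymptotic form of $\chi$ forces $\varphi'(\pi,\lambda_n)+H\varphi(\pi,\lambda_n) = O(1/s_n)$. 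The hard part lies in the previous paragraph: one must keep careful track of all remainder terms and verify that they descend to Fourier coefficients of fixed $\mathscr{L}_2$ functions at the perturbed frequencies $s_n$, at which point $\ell_2$-summability follows from the Riesz-basis property.
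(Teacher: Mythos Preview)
Your approach is the same as the paper's: derive the asymptotic form of $\chi(\lambda)$ from the transformation-operator asymptotics of $\varphi(\pi,\lambda)$ and $\varphi'(\pi,\lambda)$, localize zeros near the integers, and identify the $\ell_2$ remainder as Fourier coefficients of an $\mathscr{L}_2$ function (the paper says ``Bessel's inequality'' where you say ``Riesz basis'', which amounts to the same thing). The treatment of $\gamma_n$ is likewise what the paper has in mind.

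There is one bookkeeping slip. For $q\in\mathscr{L}_2$ you cannot integrate by parts a second time in $\int_0^\pi K_x(\pi,t)\cos(st)\,\rmd t$, so the remainder in $\varphi'(\pi,\lambda)$ after $-s\sin(s\pi)+(\omega-H)\cos(s\pi)$ is $\beta(s)$, not $\beta(s)/s$; compare the paper's expansion, where the corresponding term is $\tfrac12\int_0^\pi q(t)\cos s(\pi-2t)\,\rmd t$. Consequently the remainder in $\chi$ is $s^2\widetilde\alpha(s)$ with $\widetilde\alpha$ of Fourier type, not $s\widetilde\alpha(s)$. This does not damage the conclusion---dividing $\chi(\lambda_n)=0$ by $s_n^3$ still leaves $(n-1)\pi\delta_n=\omega+\widetilde\alpha(s_n)+o(1)$ with $\{\widetilde\alpha(s_n)\}\in\ell_2$, which is exactly~\eqref{eq:asymptotics:s_n}---but your intermediate formulae overstate the decay by one power of $s$.
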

\begin{proof}
We denote $s := \sqrt{\lambda}$. Then from the asymptotic estimates (see~\cite{Marchenko77,Yurko2001})
\begin{eqnarray*}
  \fl \varphi(x, \lambda) = \cos sx + \left( h + \frac{1}{2} \int_0^x q(t)\rmd t \right) \frac{\sin sx}{s} + \frac{1}{2} \int_0^x q(t)\frac{\sin s(x-2t)}{s}\rmd t \\
  + O\left(\frac{\rme^{|\im sx|}}{|s|^2}\right),
\end{eqnarray*}
\begin{eqnarray*}
  \fl \varphi'(x, \lambda) = -s \sin sx + \left( h + \frac{1}{2} \int_0^x q(t)\rmd t \right) \cos sx + \frac{1}{2} \int_0^x q(t)\cos s(x-2t)\rmd t \\
  + O\left(\frac{\rme^{|\im sx|}}{|s|}\right)
\end{eqnarray*}
using~(\ref{eq:phi_psi}) we have:
\begin{equation}
\label{eq:asymptotics:chi}
  \chi(\lambda) = -s^3 \sin s\pi + \left( h + H + \frac{1}{2} \int_0^{\pi} q(x)\rmd x \right) s^2 \cos s\pi + I(s) s^2,
\end{equation}
where
\begin{equation*}
  I(s) = \frac{1}{2} \int_0^{\pi} q(t)\cos s(\pi-2t)\rmd t + O\left(\frac{\rme^{|\im s\pi|}}{|s|}\right).
\end{equation*}
Now using Bessel's inequality it's easy to obtain~(\ref{eq:asymptotics:s_n}) and~(\ref{eq:asymptotics:gamma_n}).
\end{proof}

Since the function $\chi(\lambda)$ is entire of order $1/2$, from Hadamard's theorem(see \cite[Section~4.2]{Levin96}), using~(\ref{eq:asymptotics:chi}) we obtain:
\begin{equation*}
  \chi(\lambda) = -\pi(\lambda-\lambda_0)(\lambda-\lambda_1) \prod_{n=2}^{\infty} \frac{\lambda_n - \lambda}{(n-1)^2}.
\end{equation*}

\section{Main Equation}
\label{sec:main_equation}

\begin{thm} \label{thm:expansion:direct}
  Let $f(x) \in \mathscr{AC}[0,\pi]$. Then
\begin{equation*}
  f(x) = \sum_{n=0}^{\infty} \left( \frac{1}{\gamma_n} \int_0^{\pi} f(t) \varphi(t, \lambda_n) \rmd t \right) \varphi(x, \lambda_n)
\end{equation*}
with uniform convergence in $[0,\pi]$.
\end{thm}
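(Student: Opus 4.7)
The plan is to apply the classical Cauchy (contour-integration) method, adapted to the $\lambda$-linear boundary condition through the Wronskian $\chi(\lambda)$ and the identity $\dot\chi(\lambda_n) = k_n \gamma_n$ from~(\ref{eq:dot_chi}). First I would introduce the Green's-function-type kernel
\begin{equation*}
Y(x,\lambda) := -\frac{1}{\chi(\lambda)}\left[\psi(x,\lambda)\int_0^x \varphi(t,\lambda) f(t)\rmd t + \varphi(x,\lambda)\int_x^\pi \psi(t,\lambda) f(t)\rmd t\right].
\end{equation*}
Since $\varphi(x,\cdot)$ and $\psi(x,\cdot)$ are entire and $\chi$ has simple zeros precisely at $\{\lambda_n\}$, the function $Y(x,\cdot)$ is meromorphic with simple poles at the eigenvalues. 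Using~(\ref{eq:def:k_n}) to write $\psi(t,\lambda_n) = k_n \varphi(t,\lambda_n)$ and~(\ref{eq:dot_chi}) to evaluate $\dot\chi(\lambda_n)$, a short calculation gives
\begin{equation*}
\Res_{\lambda = \lambda_n} Y(x,\lambda) = -\frac{\varphi(x,\lambda_n)}{\gamma_n}\int_0^\pi \varphi(t,\lambda_n) f(t)\rmd t,
\end{equation*}
i.e.\ minus the $n$-th term of the proposed expansion.

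Next I would integrate $Y(x,\lambda)$ over the expanding contours $\Gamma_N := \{\lambda : |\lambda| = (N-1/2)^2\}$, which by Theorem~\ref{thm:asymptotics} avoid every eigenvalue and, via~(\ref{eq:asymptotics:chi}), satisfy a uniform lower bound $|\chi(\lambda)| \ge C |s|^3 \rme^{|\im s|\pi}$. The residue theorem identifies the $N$-th partial sum $S_N(x)$ of the expansion with $-\frac{1}{2\pi i}\oint_{\Gamma_N} Y(x,\lambda)\rmd\lambda$, so the theorem reduces to proving $\frac{1}{2\pi i}\oint_{\Gamma_N} Y(x,\lambda)\rmd\lambda \to -f(x)$ uniformly on $[0,\pi]$.

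To obtain this limit I would combine the above lower bound on $|\chi|$ with the standard exponential upper estimates $|\varphi(x,\lambda)| \le C \rme^{|\im s|x}$ and $|\psi(x,\lambda)| \le C |s|^2 \rme^{|\im s|(\pi-x)}$. One integration by parts in each inner integral, justified by $f \in \mathscr{AC}[0,\pi]$, isolates the leading boundary contribution at $t = x$ and yields the uniform-in-$x$ asymptotic $Y(x,\lambda) = -f(x)/\lambda + o(|\lambda|^{-1})$ on $\Gamma_N$; the main term integrates to $-2\pi i f(x)$ and the remainder to $o(1)$, which gives the desired convergence.

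The main difficulty will be making the $o(|\lambda|^{-1})$ remainder above genuinely uniform in $x \in [0,\pi]$ rather than merely pointwise. This requires carefully pairing the exponential factor $\rme^{|\im s|x}$ coming from $\varphi$ with the factor $\rme^{|\im s|(\pi-x)}$ coming from $\psi$ so that their product matches the $\rme^{|\im s|\pi}$ in the lower bound for $|\chi|$ \emph{simultaneously} for every $x$, together with proper treatment of the boundary term produced when splitting the integration at $t = x$; the residual integrands involving $q(x)f(x)$ and $f'(x)$ are then dispatched by the Riemann--Lebesgue lemma.
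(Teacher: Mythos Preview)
Your proposal is correct and follows essentially the same contour-integration strategy as the paper's own proof: the same Green-type kernel (up to an overall sign convention), the same residue computation via~(\ref{eq:def:k_n}) and~(\ref{eq:dot_chi}), the same expanding circles $\{|\lambda|=(N-1/2)^2\}$, and the same decomposition $Y = f(x)/\lambda + (\text{remainder})/\lambda$ obtained by using that $\varphi,\psi$ solve~(\ref{eq:main}) and integrating by parts. Your explicit attention to the uniformity in $x$ and to the Riemann--Lebesgue handling of the $f'$ and $qf$ residuals is exactly what the paper compresses into its one-line ``Using asymptotic estimates \dots'' claim about $Z(x,\lambda)$.
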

\begin{proof}
  We denote
\begin{equation*}
  G(x, t, \lambda) := \frac{1}{\chi(\lambda)}\left\{
\begin{array}{ll}
  \varphi (x, \lambda) \psi (t, \lambda), & 0 \le x \le t \le \pi \\
  \psi (x, \lambda) \varphi (t, \lambda), & 0 \le t \le x \le \pi
\end{array}
\right.
\end{equation*}
and consider the function
\begin{eqnarray*}
  \fl Y(x, \lambda) := \int_0^{\pi} G(x, t, \lambda) f(t) \rmd t \\
  = \frac{1}{\chi(\lambda)} \left( \psi (x, \lambda) \int_0^x \varphi (t, \lambda) f(t) \rmd t + \varphi (x, \lambda) \int_x^{\pi} \psi (t, \lambda) f(t) \rmd t \right).
\end{eqnarray*}
Using~(\ref{eq:def:k_n}) and~(\ref{eq:dot_chi}) we obtain:
\begin{eqnarray*}
  \fl \Res_{\lambda = \lambda_n} Y(x, \lambda)
  = \frac{1}{\dot{\chi}(\lambda_n)} \left( \psi (x, \lambda_n) \int_0^x \varphi (t, \lambda_n) f(t) \rmd t + \varphi (x, \lambda_n) \int_x^{\pi} \psi (t, \lambda_n) f(t) \rmd t \right) \\
  \lo{=} \frac{k_n}{\dot{\chi}(\lambda_n)} \varphi (x, \lambda_n) \int_0^{\pi} \varphi (t, \lambda_n) f(t) \rmd t = \frac{1}{\gamma_n} \varphi (x, \lambda_n) \int_0^{\pi} \varphi (t, \lambda_n) f(t) \rmd t.
\end{eqnarray*}
Noting that $\varphi(x, \lambda)$ and $\psi(x, \lambda)$ are solutions of~(\ref{eq:main}) and integrating by parts we can write:
\begin{equation*}
  Y(x, \lambda) = \frac{f(x)}{\lambda} + \frac{Z(x, \lambda)}{\lambda},
\end{equation*}
where
\begin{eqnarray*}
  \fl Z(x, \lambda) = \frac{1}{\chi(\lambda)} \left( \psi(x, \lambda) \int_0^x \varphi'(t, \lambda) f'(t) \rmd t + \varphi(x, \lambda) \int_x^{\pi} \psi'(t, \lambda) f'(t) \rmd t \right. \\
  + h f(0) \psi (x, \lambda) - (\lambda H - H_2) f(\pi) \varphi (x, \lambda) \\
  + \left. \psi (x, \lambda) \int_0^x \varphi (t, \lambda) q(t) f(t) \rmd t + \varphi (x, \lambda) \int_x^{\pi} \psi (t, \lambda) q(t) f(t) \rmd t \right).
\end{eqnarray*}
Using asymptotic estimates for the functions $\varphi(x, \lambda)$, $\psi(x, \lambda)$ and $\chi(\lambda)$ the following equality can be proved:
\begin{equation*}
  \lim _{{|s| \to \infty \atop s \in G_{\delta}}} \max _{0 \le x \le \pi} |Z(x, \lambda)| = 0,
\end{equation*}
where $G_{\delta} = \{ s: |s-n| \ge \delta, n = 0, \pm 1, \pm 2, \dots \}$ for some small fixed $\delta > 0$.

Now consider the contour integral
\begin{equation*}
  I_N(x) = \frac{1}{2 \pi i} \int _{C_N} Y(x, \lambda) \rmd \lambda,
\end{equation*}
where $C_N = \{ \lambda: |\lambda| = (N - 1/2)^2 \}$. From the above equalities we have:
\begin{equation*}
  I_N(x) = f(x) + \varepsilon _N(x), \quad \lim _{N \to \infty} \max _{0 \le x \le \pi} |\varepsilon _N(x)| = 0.
\end{equation*}
On the other hand, using the residue calculus we obtain:
\begin{equation*}
  I_N(x) = \sum_{n=0}^{N} \left( \frac{1}{\gamma_n} \int_0^{\pi} f(t) \varphi(t, \lambda_n) \rmd t \right) \varphi(x, \lambda_n).
\end{equation*}
From last two equalities we obtain the statement of the theorem.
\end{proof}
\begin{lem} \label{lem:a_x} (cf.~\cite[Lemma~1.5.4]{Yurko2001})
  Assume that numbers $\{s_n, \gamma_n\}_{n \ge 0}$ satisfying the conditions~(\ref{eq:asymptotics:s_n}), (\ref{eq:asymptotics:gamma_n}) and $\gamma_n \ne 0$ are given and denote
\begin{equation*}
  a(x) := \sum_{n=1}^{\infty} \left( \frac{\cos s_n x}{\gamma_n} - \frac{\cos (n-1)x}{\alpha_{n-1}^0} \right),
\end{equation*}
where
\begin{equation*}
  \alpha_n^0 = \left\{
\begin{array}{ll}
  \frac{\displaystyle \pi}{\displaystyle 2}, & n \ge 1, \\
  \pi, & n = 0.
\end{array}
\right.
\end{equation*}
Then $a(x) \in \mathscr{W}_2^1(0, 2\pi)$.
\end{lem}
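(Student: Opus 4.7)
The plan is to show $a \in \mathscr{W}_2^1(0, 2\pi)$ by splitting each summand into an explicit main part plus a remainder, then recognizing the resulting sub-series either as Fourier series with Parseval-summable coefficients or as explicit functions smooth on the open interval.

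For $n \ge 2$, where $\alpha_{n-1}^0 = \pi/2$, I set $\delta_n := s_n - (n-1) = (\omega/\pi + \zeta_n)/n$, which is $O(1/n)$ uniformly since $\{\zeta_n\} \in l_2$ is bounded. Using the product-to-sum formula together with Taylor expansion of $\sin$ and $\cos$ near the origin gives
$$\frac{\cos s_n x}{\gamma_n} - \frac{2\cos((n-1)x)}{\pi} = -\frac{\delta_n x}{\gamma_n}\sin((n-1)x) + \left(\frac{1}{\gamma_n} - \frac{2}{\pi}\right)\cos((n-1)x) + R_n(x),$$
with $\|R_n\|_\infty = O(1/n^2)$. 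From Theorem~\ref{thm:asymptotics} one computes $\frac{1}{\gamma_n} - \frac{2}{\pi} = v_n/n$ and $\frac{\delta_n}{\gamma_n} = \frac{2\omega}{\pi^2 n} + w_n/n$ with $\{v_n\}, \{w_n\} \in l_2$. The remainder series $\sum R_n$ converges in $\mathscr{W}_2^1(0, 2\pi)$: its dominant part, $-\frac{(\delta_n x)^2}{2\gamma_n}\cos((n-1)x)$, has derivative whose Fourier coefficients are of order $\kappa_n^2/n \in l_2$ (with $\kappa_n := \omega/\pi + \zeta_n$ bounded), and higher-order Taylor terms are $O(1/n^3)$.

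This reduces the problem to showing that the three sums
$$\sum \frac{v_n \cos((n-1)x)}{n}, \qquad x\sum \frac{w_n \sin((n-1)x)}{n}, \qquad \frac{2\omega x}{\pi^2} \sum \frac{\sin((n-1)x)}{n}$$
all lie in $\mathscr{W}_2^1(0, 2\pi)$. The first two follow from Parseval on the torus, since $\sum n^2(v_n/n)^2 = \sum v_n^2 < \infty$ and likewise for $w_n$; multiplication by $x \in C^\infty[0,2\pi]$ preserves the space. The third, most delicate sum does \emph{not} converge in $\mathscr{W}_2^1$ on the torus, and I handle it by writing $\frac{1}{m+1} = \frac{1}{m} - \frac{1}{m(m+1)}$ and invoking the classical identity $\sum_{m \ge 1} \sin(mx)/m = (\pi - x)/2$ valid on $(0, 2\pi)$: then $\sum_{n \ge 2} \sin((n-1)x)/n = (\pi - x)/2 - \sum_{m \ge 1} \sin(mx)/(m(m+1))$, where the first summand is $C^\infty$ on the open interval and the second has Fourier coefficients $O(1/m^2)$, placing it in $\mathscr{W}_2^1$ on the torus. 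The $n = 1$ term of $a$ is smooth, and combining all contributions yields the claim.

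The principal subtlety lies in this third sum: term-by-term differentiation would yield the divergent series $\sum \cos((n-1)x)$, so one cannot work on the torus; it is essential to exploit that on the open interval the pointwise limit $(\pi - x)/2$ is smooth, even though its $2\pi$-periodic extension has a jump at the endpoints.
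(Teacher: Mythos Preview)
The paper does not supply its own proof of this lemma; it merely cites Lemma~1.5.4 in Yurko's book. Your argument is the standard one and is essentially correct: expand $\cos s_n x$ about $\cos(n-1)x$ via the addition formula, separate the constant leading part of $\delta_n=s_n-(n-1)$ from its $l_2$ tail, and treat the resulting pieces by Parseval together with the explicit closed form $\sum_{m\ge1}\sin(mx)/m=(\pi-x)/2$ on $(0,2\pi)$. This is precisely the decomposition used in Yurko's proof, so your approach coincides with the one the paper defers to.

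One minor point worth tightening: for the remainder $R_n$ you should not argue via ``Fourier coefficients of the derivative'' directly, since $x^2\cos((n-1)x)$ is not a single Fourier mode. Cleaner is to factor the polynomial weight out of the sum first, writing
\[
\sum_{n\ge2}\frac{-\delta_n^2 x^2}{2\gamma_n}\cos((n-1)x)=-\frac{x^2}{2}\sum_{n\ge2}\frac{\delta_n^2}{\gamma_n}\cos((n-1)x),
\]
observe that $\delta_n^2/\gamma_n=O(1/n^2)$ so the inner series lies in $\mathscr{W}_2^1$ by Parseval, and then use that multiplication by the smooth function $x^2$ preserves $\mathscr{W}_2^1(0,2\pi)$. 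The higher-order Taylor remainders are $O(1/n^3)$ in $C^1$-norm and cause no difficulty.
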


We denote
\begin{equation}
\label{eq:def:F}
  \fl F(x,t) = \frac{\cos s_0 x \cos s_0 t}{\gamma_0} + \sum_{n=1}^{\infty} \left( \frac{\cos s_n x \cos s_n t}{\gamma_n} - \frac{\cos (n-1)x \cos (n-1)t}{\alpha_{n-1}^0} \right).
\end{equation}
Since
\begin{equation*}
  F(x,t) = \frac{\cos s_0 x \cos s_0 t}{\gamma_0} + \frac{a(x+t) + a(x-t)}{2},
\end{equation*}
Lemma~\ref{lem:a_x} implies that $F(x,t)$ is continuous and $\frac{d}{dx}F(x,x) \in \mathscr{L}_2(0,\pi)$.
Using the transformation operators (\cite{Marchenko77,Yurko2001}), we can write equalities
\begin{equation}
\label{eq:transformation:phi}
  \varphi(x, \lambda) = \cos sx + \int_0^x K(x,t)\cos st \rmd t,
\end{equation}
\begin{equation}
\label{eq:transformation:cos}
  \cos sx = \varphi(x, \lambda) + \int_0^x H(x,t)\varphi(t, \lambda) \rmd t,
\end{equation}
where $K(x,t)$ and $H(x,t)$ are real-valued continuous functions and
\begin{equation}
\label{eq:K_x_x}
  K(x,x) = h + \frac{1}{2} \int_0^x q(t) \rmd t.
\end{equation}

\begin{thm}
  For each fixed $x \in (0,\pi]$ the kernel $K(x,t)$ satisfies the following equation:
\begin{equation}
\label{eq:main_equation}
  F(x,t) + K(x,t) + \int_0^x K(x, \tau) F(\tau, t) \rmd \tau = 0, \quad 0 < t < x.
\end{equation}
\end{thm}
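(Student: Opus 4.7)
My plan is to derive (\ref{eq:main_equation}) by combining Theorem~\ref{thm:expansion:direct} with the transformation-operator formulas (\ref{eq:transformation:phi}) and (\ref{eq:transformation:cos}). The central intermediate fact is the identity
\begin{equation*}
  \sum_{n=0}^{\infty} \frac{\varphi(x,\lambda_n) \cos s_n t}{\gamma_n} = 0, \qquad 0 < t < x \le \pi, \qquad (\ast)
\end{equation*}
which I shall justify below. Granted $(\ast)$, I substitute $\varphi(x,\lambda_n) = \cos s_n x + \int_0^x K(x,\tau)\cos s_n\tau\,\rmd\tau$ from (\ref{eq:transformation:phi}) into the left side of $(\ast)$, obtaining
\begin{equation*}
  \sum_n \frac{\cos s_n x \cos s_n t}{\gamma_n} + \int_0^x K(x,\tau)\sum_n \frac{\cos s_n \tau \cos s_n t}{\gamma_n}\,\rmd\tau = 0.
\end{equation*}
By the definition (\ref{eq:def:F}), each inner sum equals $F(\,\cdot\,,t)$ plus the unperturbed Fourier cosine kernel $\sum_{k \ge 0} (\alpha_k^0)^{-1}\cos kx \cos kt$, which represents the identity on $\mathscr{L}_2(0,\pi)$. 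For $0 < t < x$, the unperturbed kernel contributes nothing to the first term (its distributional value at $x \ne t$ is zero) and contributes $K(x,t)$ to the convolution against $K(x,\tau)$ on $(0,x)$; the remaining terms reproduce exactly~(\ref{eq:main_equation}).

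To establish $(\ast)$ I would apply the inverse transformation (\ref{eq:transformation:cos}) with $t$ as spatial variable: $\cos s_n t = \varphi(t,\lambda_n) + \int_0^t H(t,u)\varphi(u,\lambda_n)\,\rmd u$. This rewrites the left side of $(\ast)$ as
\begin{equation*}
  \sum_n \frac{\varphi(x,\lambda_n)\varphi(t,\lambda_n)}{\gamma_n} + \int_0^t H(t,u) \sum_n \frac{\varphi(x,\lambda_n)\varphi(u,\lambda_n)}{\gamma_n}\,\rmd u.
\end{equation*}
Theorem~\ref{thm:expansion:direct}, read as $f(x) = \sum_n \gamma_n^{-1}\varphi(x,\lambda_n)\int_0^\pi f(s)\varphi(s,\lambda_n)\,\rmd s$, says precisely that the kernel $\sum_n \varphi(x,\lambda_n)\varphi(\,\cdot\,,\lambda_n)/\gamma_n$ represents point evaluation at $x$ on a dense subspace of $\mathscr{L}_2(0,\pi)$. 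Since $t \in (0,x)$ and all $u \in (0,t) \subset (0,x)$ avoid the point $x$, both sums above vanish distributionally, yielding $(\ast)$.

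The main obstacle is making these formal manipulations with kernel series rigorous, since neither the Fourier cosine Dirichlet kernel nor its perturbed analog converges in a classical pointwise sense. The standard fix is to test the proposed identity against an arbitrary $f \in C^\infty_c(0,x)$: multiply the left side of (\ref{eq:main_equation}) by $f(t)$, integrate over $t \in (0,\pi)$, interchange all summations with integrations, and then recognize the resulting expressions by applying Theorem~\ref{thm:expansion:direct} directly to $f$. The interchanges are justified by the $l_2$ asymptotics of $\{s_n\}, \{\gamma_n\}$ from Theorem~\ref{thm:asymptotics}, the $\mathscr{W}_2^1$-regularity of $a$ from Lemma~\ref{lem:a_x}, and the continuity of $K(x,\cdot)$. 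This proves (\ref{eq:main_equation}) in $\mathscr{L}_2(0,x)$; pointwise continuity of $F(x,\cdot)$ and $K(x,\cdot)$ in $t$ then upgrades it to the stated pointwise identity for every $0 < t < x$.
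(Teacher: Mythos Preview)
Your proposal is correct and follows essentially the same route as the paper: both arguments start from the bilinear sum $\sum_n \gamma_n^{-1}\varphi(x,\lambda_n)\cos s_n t$, expand it two ways via the transformation operators (\ref{eq:transformation:phi}) and (\ref{eq:transformation:cos}), invoke Theorem~\ref{thm:expansion:direct} to identify the $\varphi\varphi$-kernel with point evaluation, and test against an arbitrary function to pass rigorously to the limit. The only cosmetic difference is that the paper works with partial sums $\sum_{n=0}^N$ throughout and records the limits term by term (its decomposition $\Phi_N = I_N + I_N' + I_N'' + I_N'''$ is exactly your splitting into $F$, the unperturbed cosine kernel acting on $K(x,\cdot)$, and the $F$-convolution), whereas you argue formally first and defer the partial-sum justification to the final paragraph; the underlying computation is identical.
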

\begin{proof}
  Using equalities~(\ref{eq:transformation:phi}) and~(\ref{eq:transformation:cos}) we obtain:
\begin{equation*}
  \fl \sum_{n=0}^N \frac{\varphi(x, \lambda_n) \cos s_n t}{\gamma_n} = \sum_{n=0}^N \left( \frac{\cos s_n x \cos s_n t}{\gamma_n} + \frac{\cos s_n t}{\gamma_n} \int_0^x K(x, \tau) \cos s_n \tau \rmd \tau \right),
\end{equation*}
\begin{equation*}
  \fl \sum_{n=0}^N \frac{\varphi(x, \lambda_n) \cos s_n t}{\gamma_n} = \sum_{n=0}^N \left( \frac{\varphi(x, \lambda_n) \varphi(t, \lambda_n)}{\gamma_n} + \frac{\varphi(x, \lambda_n)}{\gamma_n} \int_0^t H(t, \tau) \varphi(\tau, \lambda_n) \rmd \tau \right).
\end{equation*}
Therefore we can write:
\begin{equation*}
  \Phi_N(x,t) = I_N(x,t) + I_N'(x,t) + I_N''(x,t) + I_N'''(x,t),
\end{equation*}
where
\begin{equation*}
  \Phi_N(x,t) = \sum_{n=0}^N \frac{\varphi(x, \lambda_n) \varphi(t, \lambda_n)}{\gamma_n} - \sum_{n=0}^{N-1} \frac{\cos nx \cos nt}{\alpha_n^0},
\end{equation*}
\begin{equation*}
  I_N(x,t) = \sum_{n=0}^N \frac{\cos s_n x \cos s_n t}{\gamma_n} - \sum_{n=0}^{N-1} \frac{\cos nx \cos nt}{\alpha_n^0},
\end{equation*}
\begin{equation*}
  I_N'(x,t) = \sum_{n=0}^{N-1} \frac{\cos nt}{\alpha_n^0} \int_0^x K(x, \tau) \cos n\tau \rmd \tau,
\end{equation*}
\begin{equation*}
  I_N''(x,t) = \int_0^x K(x, \tau) \left( \sum_{n=0}^N \frac{\cos s_n t \cos s_n \tau}{\gamma_n} - \sum_{n=0}^{N-1} \frac{\cos nt \cos n\tau}{\alpha_n^0} \right) \rmd \tau,
\end{equation*}
\begin{equation*}
  I_N'''(x,t) = -\sum_{n=0}^N \frac{\varphi(x, \lambda_n)}{\gamma_n} \int_0^t H(t, \tau) \varphi(\tau, \lambda_n) \rmd \tau.
\end{equation*}
Let $f(x)$ be an absolutely continuous function. Then using Theorem~\ref{thm:expansion:direct} we obtain (uniformly on $x \in [0, \pi]$):
\begin{equation*}
  \lim_{N \to \infty} \int_0^{\pi} f(t) \Phi_N(x,t) \rmd t = 0,
\end{equation*}
\begin{equation*}
  \lim_{N \to \infty} \int_0^{\pi} f(t) I_N(x,t) \rmd t = \int_0^{\pi} f(t)F(x,t)\rmd t,
\end{equation*}
\begin{equation*}
  \lim_{N \to \infty} \int_0^{\pi} f(t) I_N'(x,t) \rmd t = \int_0^x f(t)K(x,t)\rmd t,
\end{equation*}
\begin{equation*}
  \lim_{N \to \infty} \int_0^{\pi} f(t) I_N''(x,t) \rmd t = \int_0^{\pi} f(t) \left( \int_0^x K(x, \tau) F(\tau, t) \rmd \tau \right) \rmd t,
\end{equation*}
\begin{equation*}
  \lim_{N \to \infty} \int_0^{\pi} f(t) I_N'''(x,t) \rmd t = -\int_x^{\pi} f(t)H(t,x)\rmd t,
\end{equation*}
We put $K(x,t) = H(x,t) = 0$ for $x < t$. Since $f(x)$ can be chosen arbitrarily, we have
\begin{equation*}
  F(x,t) + K(x,t) + \int_0^x K(x, \tau) F(\tau, t) \rmd \tau - H(t,x) = 0.
\end{equation*}
When $t < x$ this equation implies~(\ref{eq:main_equation}).
\end{proof}

\section{Uniqueness}
\label{sec:uniqueness}

\begin{lem} \label{lem:main_equation_uniqueness}
  For each fixed $x \in (0,\pi]$ equation~(\ref{eq:main_equation}) has a unique solution $K(x,t) \in \mathscr{L}_2(0,x)$.
\end{lem}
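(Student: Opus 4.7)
My plan is to apply the Fredholm alternative: since $F(\tau,t)$ is continuous on $[0,\pi]^2$ (as noted just after the definition~(\ref{eq:def:F})), the integral operator $T_x\colon g \mapsto \int_0^x g(\tau) F(\tau,\cdot)\,\rmd\tau$ is Hilbert--Schmidt on $\mathscr{L}_2(0,x)$. Therefore it suffices to show that the homogeneous equation $g + T_x g = 0$ admits only the trivial solution in $\mathscr{L}_2(0,x)$.

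The heart of the argument is a quadratic-form computation. Assume $g \in \mathscr{L}_2(0,x)$ satisfies $g(t) + \int_0^x g(\tau) F(\tau, t)\,\rmd \tau = 0$. I would multiply by $g(t)$, integrate over $(0,x)$, and substitute the series~(\ref{eq:def:F}); termwise integration is permissible because the split $F(x,t) = \gamma_0^{-1}\cos s_0 x \cos s_0 t + \tfrac{1}{2}(a(x+t)+a(x-t))$ with $a \in \mathscr{W}_2^1$ (Lemma~\ref{lem:a_x}) makes the tail of the series converge in $\mathscr{L}_2$. Setting $a_n := \int_0^x g(t)\cos s_n t\,\rmd t$ and $b_n := \int_0^x g(t)\cos nt\,\rmd t$, this produces
\begin{equation*}
  \int_0^x g^2(t)\,\rmd t + \sum_{n=0}^{\infty} \frac{a_n^2}{\gamma_n} - \sum_{n=0}^{\infty} \frac{b_n^2}{\alpha_n^0} = 0.
\end{equation*}
Extending $g$ by zero on $(x,\pi)$ and invoking Parseval's identity for the complete orthogonal cosine system on $\mathscr{L}_2(0,\pi)$ gives $\int_0^x g^2(t)\,\rmd t = \sum_{n\ge 0} b_n^2/\alpha_n^0$, so the identity collapses to $\sum_{n\ge 0} a_n^2/\gamma_n = 0$. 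Since $\gamma_n = \|\Phi_n\|^2 > 0$ (or positive by hypothesis on admissible spectral data in the reconstruction setting), every $a_n$ vanishes.

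The main obstacle, and the step I would prepare most carefully, is concluding $g \equiv 0$ from $\int_0^x g(t)\cos s_n t\,\rmd t = 0$ for all $n \ge 0$. The asymptotics~(\ref{eq:asymptotics:s_n}) express $\{s_n\}_{n\ge 0}$ as a quadratically small perturbation of $\{n-1\}_{n\ge 0}$, so a Levinson/Paley--Wiener-type stability result ensures that the system $\{\cos s_n t\}_{n \ge 0}$ is complete in $\mathscr{L}_2(0,\pi)$. Applied to the zero extension of $g$, this forces $g = 0$ in $\mathscr{L}_2(0,x)$, completing the Fredholm alternative argument.
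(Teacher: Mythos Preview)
Your proposal is correct and follows essentially the same route as the paper: reduce to the homogeneous equation via the Fredholm alternative, pair with $g$ and expand $F$ as a cosine series, cancel the classical cosine term using Parseval, and then conclude $g=0$ from the completeness of $\{\cos s_n t\}$ in $\mathscr{L}_2(0,\pi)$ (the paper cites \cite[Proposition~1.8.6]{Yurko2001} for this last step, which is exactly the Levinson-type result you invoke). Your write-up is in fact slightly more careful than the paper's in justifying termwise integration and in naming the Fredholm alternative explicitly.
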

\begin{proof}
  It suffices to prove that homogeneous equation
\begin{equation*}
  g(t) + \int_0^x F(\tau,t)g(\tau) \rmd \tau = 0
\end{equation*}
has only trivial solution $g(t) = 0$.

Let $g(t)$ be a solution of the above equation and $g(t) = 0$ for $t \in (x,\pi)$. Then
\begin{equation*}
  \int_0^x g^2(t) \rmd t + \int_0^x \int_0^x F(\tau,t)g(\tau)g(t) \rmd \tau \rmd t = 0
\end{equation*}
or
\begin{equation*}
  \fl \int_0^x g^2(t) \rmd t + \sum_{n=0}^{\infty} \frac{1}{\gamma_n} \left( \int_0^x g(t) \cos s_nt \rmd t \right)^2 - \sum_{n=0}^{\infty} \frac{1}{\alpha_n^0} \left( \int_0^x g(t) \cos nt \rmd t \right)^2 = 0.
\end{equation*}
Using Parseval's equality
\begin{equation*}
  \int_0^x g^2(t) \rmd t = \sum_{n=0}^{\infty} \frac{1}{\alpha_n^0} \left( \int_0^x g(t) \cos nt \rmd t \right)^2
\end{equation*}
and noting that $\gamma_n > 0$ we obtain:
\begin{equation*}
  \int_0^x g(t) \cos s_nt \rmd t = 0, \quad n \ge 0.
\end{equation*}
The system $\{ \cos s_nt \}$ is complete in $\mathscr{L}_2(0,\pi)$ (see~\cite[Proposition~1.8.6]{Yurko2001}). Therefore $g(t) = 0$.
\end{proof}
\begin{lem} \label{lem:smoothness} (\cite[Lemma~1.5.2]{Yurko2001})
  Consider an integral equation
\begin{equation}
\label{eq:lem:smoothness}
  y(t,\alpha) + \int_a^b A(t,\tau,\alpha)y(\tau,\alpha)\rmd \tau = f(t,\alpha), \quad a \le t \le b,
\end{equation}
where $A(t,\tau,\alpha)$ and $f(t,\alpha)$ are continuous functions. Assume that, for some fixed $\alpha = \alpha_0$ the homogeneous equation
\begin{equation*}
  z(t) + \int_a^b A_0(t,\tau)z(\tau)\rmd \tau = 0, \quad A_0(t,\tau) := A(t,\tau,\alpha_0)
\end{equation*}
has only trivial solution. Then in some neighbourhood of the point $\alpha = \alpha_0$ the solution $y(t,\alpha)$ of the equation~(\ref{eq:lem:smoothness}) is continuous on $t$ and $\alpha$. Moreover, the function $y(t,\alpha)$ has the same smoothness as $A(t,\tau,\alpha)$ and $f(t,\alpha)$.
\end{lem}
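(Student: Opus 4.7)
The plan is to treat this as a standard Fredholm-type continuity/smoothness result for parameter-dependent integral equations, working in the Banach space $C[a,b]$ with the sup norm. I would denote by $T_\alpha$ the integral operator $(T_\alpha y)(t) = \int_a^b A(t,\tau,\alpha) y(\tau)\,\rmd \tau$, so the equation reads $(I+T_\alpha)y(\cdot,\alpha) = f(\cdot,\alpha)$. Since $A$ is continuous on the compact set $[a,b]\times[a,b]\times\{\alpha_0\}$, the operator $T_{\alpha_0}$ is compact on $C[a,b]$. The hypothesis says that $-1$ is not an eigenvalue of $T_{\alpha_0}$, so by the Fredholm alternative $(I+T_{\alpha_0})^{-1}$ exists as a bounded operator on $C[a,b]$.

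Next, I would exploit the continuity of $A$ to show that $T_\alpha \to T_{\alpha_0}$ in operator norm as $\alpha \to \alpha_0$. Indeed, the operator norm of $T_\alpha - T_{\alpha_0}$ is bounded by
\begin{equation*}
  (b-a) \max_{t,\tau \in [a,b]} |A(t,\tau,\alpha) - A(t,\tau,\alpha_0)|,
\end{equation*}
which tends to $0$ by uniform continuity of $A$ on compact sets of the $\alpha$-variable. A standard Neumann-series perturbation argument then gives a neighbourhood $U$ of $\alpha_0$ on which $I+T_\alpha$ is invertible, with inverses uniformly bounded and depending continuously (in operator norm) on $\alpha$. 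Combined with continuity of $\alpha \mapsto f(\cdot,\alpha)$ in $C[a,b]$, this yields continuity of $y(\cdot,\alpha) = (I+T_\alpha)^{-1} f(\cdot,\alpha)$ as a $C[a,b]$-valued function of $\alpha$, and joint continuity of $y(t,\alpha)$ follows.

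For the smoothness claim, I would proceed by induction on the order of regularity. Assume $A$ and $f$ are $C^k$ in $\alpha$. Differentiating the integral equation formally with respect to $\alpha$ gives
\begin{equation*}
  y_\alpha(t,\alpha) + \int_a^b A(t,\tau,\alpha) y_\alpha(\tau,\alpha) \rmd \tau = f_\alpha(t,\alpha) - \int_a^b A_\alpha(t,\tau,\alpha) y(\tau,\alpha) \rmd \tau,
\end{equation*}
which is another equation of the same form $(I+T_\alpha)y_\alpha = \tilde f$, with a continuous right-hand side thanks to the continuity of $y$ established in the previous step. Since $(I+T_\alpha)^{-1}$ already exists and is continuous in $\alpha$ on $U$, this equation has a continuous solution, and a standard difference-quotient argument identifies it with $\partial y/\partial\alpha$. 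Iterating yields $C^k$-smoothness in $\alpha$, and regularity in $t$ is read off directly from the integral equation once $A$ and $f$ have the corresponding smoothness in $t$.

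The main technical obstacle is the perturbation step: one must verify that $T_\alpha$ is not merely strongly continuous in $\alpha$ but norm-continuous, so that the Neumann series for $(I+T_\alpha)^{-1}$ based at $\alpha_0$ converges uniformly in $\alpha$ on a full neighbourhood. Once this operator-norm continuity is in hand, everything else — continuity, differentiability, and higher smoothness of $y$ — is a routine bootstrap from the identity $y(\cdot,\alpha) = (I+T_\alpha)^{-1} f(\cdot,\alpha)$ and differentiation of the equation under the integral sign, justified by dominated convergence using the uniform bound on $\|(I+T_\alpha)^{-1}\|$.
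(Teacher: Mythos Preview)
The paper does not give its own proof of this lemma: it is quoted verbatim from \cite[Lemma~1.5.2]{Yurko2001} and used as a black box, so there is no ``paper's proof'' to compare against. Your outline is the standard and correct one for this type of result --- Fredholm alternative at $\alpha_0$, norm-continuity of $\alpha\mapsto T_\alpha$ via uniform continuity of $A$ on compact sets, Neumann-series perturbation to obtain invertibility and a uniform bound on $(I+T_\alpha)^{-1}$ in a neighbourhood, and then bootstrapping smoothness by differentiating the equation and solving the resulting equation of the same type. This is essentially the argument one finds in Yurko's book, so nothing is missing.
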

\begin{thm}
  Let $\mathcal{P}(q,h,H,H_1,H_2)$ and $\mathcal{P}(\widetilde{q}, \widetilde{h}, \widetilde{H}, \widetilde{H}_1, \widetilde{H}_2)$ be two boundary-value problems with one boundary condition depending linearly on the spectral parameter and
\begin{equation*}
  \lambda_n = \widetilde{\lambda}_n, \quad \gamma_n = \widetilde{\gamma}_n, \qquad \qquad n \ge 0.
\end{equation*}
Then
\begin{equation*}
  \fl q(x) = \widetilde{q}(x)\textit{ a.e. on }(0,\pi), \quad h = \widetilde{h}, \quad H = \widetilde{H}, \quad H_1 = \widetilde{H}_1, \quad H_2 = \widetilde{H}_2.
\end{equation*}
\end{thm}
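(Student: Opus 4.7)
The plan is to run the Gel'fand--Levitan-style reconstruction backwards: show that the spectral data $\{\lambda_n,\gamma_n\}_{n\ge 0}$ determine $F(x,t)$, then $K(x,t)$, then $\varphi(x,\lambda)$, and finally all the coefficients. First I observe that $F(x,t)$ as defined in (\ref{eq:def:F}) depends only on the sequences $\{s_n\}$ and $\{\gamma_n\}$, so the hypothesis $\lambda_n=\widetilde\lambda_n$, $\gamma_n=\widetilde\gamma_n$ gives $F(x,t)=\widetilde F(x,t)$. The main equation (\ref{eq:main_equation}) then reads identically for the kernels $K$ and $\widetilde K$, and Lemma~\ref{lem:main_equation_uniqueness} yields $K(x,t)=\widetilde K(x,t)$ for a.e.\ $t\in(0,x)$ and every fixed $x$.

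Next, the transformation formula (\ref{eq:transformation:phi}) gives $\varphi(x,\lambda)=\widetilde\varphi(x,\lambda)$ for all $x\in[0,\pi]$ and all $\lambda\in\mathbb C$. Applying (\ref{eq:K_x_x}) at $x=0$ yields $h=K(0,0)=\widetilde K(0,0)=\widetilde h$; differentiating in $x$ and using $K(x,x)=\widetilde K(x,x)$ gives $q(x)=\widetilde q(x)$ for a.e.\ $x\in(0,\pi)$.

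It remains to recover $H$, $H_1$, $H_2$. Since $\chi(\lambda)$ is completely determined by its zeros via the Hadamard product displayed at the end of Section~\ref{sec:preliminaries}, equality of spectra forces $\chi(\lambda)=\widetilde\chi(\lambda)$. Evaluating the Wronskian expression for $\chi$ at $x=\pi$ and using the initial data (\ref{eq:phi_psi}) for $\psi$ gives
\begin{equation*}
  \chi(\lambda)=\lambda\bigl(H\varphi(\pi,\lambda)+\varphi'(\pi,\lambda)\bigr)-\bigl(H_2\varphi(\pi,\lambda)+H_1\varphi'(\pi,\lambda)\bigr),
\end{equation*}
and the analogous identity with tildes. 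Subtracting, and using $\varphi=\widetilde\varphi$, yields
\begin{equation*}
  \lambda(H-\widetilde H)\varphi(\pi,\lambda)=(H_2-\widetilde H_2)\varphi(\pi,\lambda)+(H_1-\widetilde H_1)\varphi'(\pi,\lambda)
\end{equation*}
for all $\lambda$. The three entire functions $\lambda\varphi(\pi,\lambda)$, $\varphi'(\pi,\lambda)$, $\varphi(\pi,\lambda)$ have asymptotic behaviour of orders $s^2\cos s\pi$, $-s\sin s\pi$, $\cos s\pi$ respectively as $|s|\to\infty$, so comparing growth rates along the real axis successively forces $H=\widetilde H$, then $H_1=\widetilde H_1$, then $H_2=\widetilde H_2$.

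The main obstacle I anticipate is in fact a small subtlety in this last step: one must ensure that $\varphi(\pi,\lambda)$ and $\varphi'(\pi,\lambda)$ are linearly independent as functions of $\lambda$, which holds because otherwise a nontrivial relation $a\varphi(\pi,\lambda)+b\varphi'(\pi,\lambda)\equiv 0$ would make every $\lambda\in\mathbb C$ an eigenvalue of a classical Sturm--Liouville problem, an absurdity. The remaining verifications (that $F$ is determined by the data, and that the asymptotics in (\ref{eq:asymptotics:s_n}) can be invoked to separate the three terms) are routine given the material already developed.
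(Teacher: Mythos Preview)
Your proof is correct and follows the paper's argument almost verbatim through the recovery of $F$, $K$, $q$, $h$, $\varphi$ and $\chi$. For the final step the paper proceeds slightly differently: instead of subtracting the Wronskian identities and comparing asymptotic growth, it first deduces $k_n=\widetilde k_n$ from~(\ref{eq:dot_chi}) together with $\chi\equiv\widetilde\chi$, and then reads off $H_1=\widetilde H_1$ directly from $k_n\varphi(\pi,\lambda_n)=-\lambda_n+H_1$ and $H=\widetilde H$, $H_2=\widetilde H_2$ from $k_n\varphi'(\pi,\lambda_n)=\lambda_n H-H_2$ (two distinct $\lambda_n$ suffice); both finishes are equally short and valid.
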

\begin{proof}
  According to the formula~(\ref{eq:def:F}) $F(x,t) = \widetilde{F}(x,t)$. Then from the main equation~(\ref{eq:main_equation}) we obtain $K(x,t) = \widetilde{K}(x,t)$. Equality~(\ref{eq:K_x_x}) implies that $h = \widetilde{h}$ and $q(x) = \widetilde{q}(x)$ a.e. on $(0,\pi)$. From~(\ref{eq:transformation:phi}) we have $\varphi(x, \lambda_n) = \widetilde{\varphi}(x, \lambda_n)$. In consideration of~(\ref{eq:asymptotics:chi}) we obtain $\chi(\lambda) \equiv \widetilde{\chi}(\lambda)$ and $k_n = \widetilde{k}_n$. Finally, by using~(\ref{eq:phi_psi}) and~(\ref{eq:def:k_n}) the remaining part of the theorem can be proved.
\end{proof}

\section{Reconstruction by spectral data}
\label{sec:reconstruction1}

Let two sequences of real numbers $\{\lambda_n\}$ and $\{\gamma_n\}$ ($n \in \mathbb{Z}_{+}$) with the following properties be given:
\begin{equation}
\label{eq:spectral_data1}
  s_n = \sqrt{\lambda_n} = n - 1 + \frac{\omega}{n \pi} + \frac{\zeta_n}{n}, \quad
  \gamma_n = \frac{\pi}{2} + \frac{\zeta'_n}{n}, \quad
  \{\zeta_n\}, \{\zeta'_n\} \in l_2,
\end{equation}
\begin{equation}
\label{eq:spectral_data2}
  \lambda_n \ne \lambda_m, \quad n \ne m, \qquad \gamma_n > 0, \quad n \in \mathbb{Z}_{+}.
\end{equation}

Using these numbers we construct $F(x,t)$ by the formula~(\ref{eq:def:F}) and determine $K(x,t)$ from~(\ref{eq:main_equation}). Substituting $t \to tx$, $\tau \to \tau x$ in~(\ref{eq:main_equation}) we obtain:
\begin{equation*}
  F(x,xt) + K(x,xt) + x \int_0^1 K(x, x\tau) F(x\tau, xt) \rmd \tau = 0, \quad 0 \le t \le 1.
\end{equation*}
According to this equation and Lemmas~\ref{lem:main_equation_uniqueness} and~\ref{lem:smoothness} $K(x,t)$ is determined uniquely and $\frac{d}{dx}K(x,x) \in \mathscr{L}_2(0,\pi)$. Now, let's construct the functions $q(x)$, $\varphi(x, \lambda)$, $\chi(\lambda)$ and the number $h$ by
\begin{equation*}
  q(x) := 2\frac{d}{dx}K(x,x), \quad h := K(0,0),
\end{equation*}
\begin{equation} \label{eq:inverse:phi}
  \varphi(x, \lambda) := \cos sx + \int_0^x K(x,t)\cos st \rmd t,
\end{equation}
\begin{equation*}
  \chi(\lambda) = -\pi(\lambda-\lambda_0)(\lambda-\lambda_1) \prod_{n=2}^{\infty} \frac{\lambda_n - \lambda}{(n-1)^2}
\end{equation*}
and put
\begin{equation*}
  k_n := \frac{\dot{\chi}(\lambda_n)}{\gamma_n}.
\end{equation*}
From (\ref{eq:spectral_data2}) we have: $k_n \ne 0$.
\begin{lem} \label{lem:Marchenko} (\cite[Lemma~3.4.2]{Marchenko77})
  For the functions $u(z)$, $v(z)$ to be represented in the form
\begin{equation*}
  u(z) = \sin \pi z + A\pi\frac{4z}{4z^2-1}\cos \pi z + \frac{f(z)}{z},
\end{equation*}
\begin{equation*}
  v(z) = \cos \pi z - B\pi\frac{\sin \pi z}{z} + \frac{g(z)}{z},
\end{equation*}
where
\begin{equation*}
  f(z) = \int_0^{\pi} \widetilde{f}(t) \cos zt \rmd t, \quad \widetilde{f}(t) \in \mathscr{L}_2[0,\pi], \quad \int_0^{\pi} \widetilde{f}(t)\rmd t = 0,
\end{equation*}
\begin{equation*}
  g(z) = \int_0^{\pi} \widetilde{g}(t) \sin zt \rmd t, \quad \widetilde{g}(t) \in \mathscr{L}_2[0,\pi],
\end{equation*}
it is necessary and sufficient to have the form
\begin{equation*}
  u(z) = \pi z \prod_{n=1}^{\infty} n^{-2}(u_n^2 - z^2), \quad u_n = n - \frac{A}{n} + \frac{\alpha_n}{n},
\end{equation*}
\begin{equation*}
  v(z) = \prod_{n=1}^{\infty} \left( n - \frac{1}{2} \right)^{-2}(v_n^2 - z^2), \quad v_n = n - \frac{1}{2} - \frac{B}{n} + \frac{\beta_n}{n},
\end{equation*}
where $\alpha_n$ and $\beta_n$ are arbitrary sequences that satisfy conditions
\begin{equation*}
  \sum_{n=1}^{\infty} |\alpha_n|^2 < \infty, \quad \sum_{n=1}^{\infty} |\beta_n|^2 < \infty.
\end{equation*}
\end{lem}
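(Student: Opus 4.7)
The plan is to prove both implications using Hadamard's factorization theorem for entire functions of order $1$, together with the Paley--Wiener characterization of cosine/sine transforms of $\mathscr{L}_2[0,\pi]$ functions, supplemented by a careful asymptotic analysis of infinite products against the Euler factorisations $\sin \pi z = \pi z \prod_{n \ge 1}(1 - z^2/n^2)$ and $\cos \pi z = \prod_{n \ge 1}(1 - z^2/(n-1/2)^2)$.

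For the necessity direction for $u(z)$, I would first record two structural facts from the explicit representation: $u(z)$ is an odd entire function (since $f$, being a cosine transform, is even, so $f(z)/z$ is odd), and it has exponential type exactly $\pi$ (inherited from $\sin \pi z$, with $f(z)/z = o(\rme^{\pi|\im z|})$ by Paley--Wiener). To locate its zeros I would apply Rouch\'e's theorem on fixed-radius disks about each sufficiently large integer $n$, in which the $\sin \pi z$ term dominates; this places exactly one zero $u_n$ in each such disk. Substituting $z = u_n$ into the representation and expanding $\sin \pi u_n$ about $n$ then yields $u_n = n - A/n + \alpha_n/n$, with the $l_2$ summability of $\{\alpha_n\}$ coming from Bessel's inequality applied to the cosine Fourier coefficients of $\tilde f$. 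Hadamard's theorem, combined with oddness and the observation that $u(z) \sim \sin \pi z$ along the imaginary axis, then forces $u(z) = \pi z \prod_{n \ge 1} n^{-2}(u_n^2 - z^2)$.

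For sufficiency, I would pass to the ratio
\begin{equation*}
  \frac{u(z)}{\sin \pi z} = \prod_{n=1}^{\infty} \frac{u_n^2 - z^2}{n^2 - z^2},
\end{equation*}
take logarithms, and expand using $u_n^2 - n^2 = -2A + 2\alpha_n + O(1/n^2)$. The contribution from the constant $-2A$ can be summed in closed form via the Mittag--Leffler expansion $\pi \cot \pi z = 1/z + 2z \sum (z^2 - n^2)^{-1}$; after multiplying back by $\sin \pi z$ and carefully reorganising the small-$n$ terms responsible for the half-integer behaviour, this sum assembles into precisely the explicit correction $A \pi \frac{4z}{4z^2 - 1}\cos \pi z$. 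The remaining contribution is controlled by the $l_2$ sequence $\{\alpha_n\}$ and defines a Paley--Wiener function of exponential type $\pi$ which, divided by $z$, has the representation $f(z)/z$ with $\tilde f \in \mathscr{L}_2[0,\pi]$; the zero-mean condition $\int_0^\pi \tilde f\, \rmd t = 0$ records the normalization that the remainder must vanish at $z = 0$ once the explicit terms have been subtracted.

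The statement for $v(z)$ runs in parallel: $v$ is treated as an even entire function, one compares against $\cos \pi z$ and locates zeros near the half-integers via Rouch\'e, and the linear correction $-B\pi (\sin \pi z)/z$ emerges from the analogous Mittag--Leffler computation applied to $v_n^2 - (n-1/2)^2 = -2B + 2\beta_n + O(1/n^2)$. The main technical obstacle in both cases lies in the bookkeeping of the third paragraph: one must extract a closed-form explicit piece (carrying the constants $A$ or $B$ together with whatever singular behaviour arises from the low-index terms) from the logarithmic expansion of the infinite product in such a way that what remains is visibly a Paley--Wiener function of the correct parity and with the correct boundary behaviour at the endpoints of $[0,\pi]$. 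Only after this clean separation does the Paley--Wiener theorem yield the integral representations for $f$ and $g$.
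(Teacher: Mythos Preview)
The paper does not prove this lemma at all: it is stated with an explicit citation to Marchenko's monograph (Lemma~3.4.2 there) and then used as a black box in the subsequent arguments. There is therefore no ``paper's own proof'' against which to compare your attempt.

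As for the proposal itself, the overall architecture is the standard one and matches how Marchenko actually proceeds: Rouch\'e to localise zeros, Hadamard to pass to the canonical product, and in the converse direction a logarithmic expansion of the ratio $u(z)/\sin\pi z$ (respectively $v(z)/\cos\pi z$) together with Paley--Wiener to identify the remainder. One point you should sharpen is the emergence of the specific factor $4z/(4z^2-1)$ in the $u$-representation. A naive Mittag--Leffler computation along the lines you sketch produces $A\pi(\cos\pi z)/z$ as the first correction, not $A\pi\,\frac{4z}{4z^2-1}\cos\pi z$; the former has a pole at $z=0$ while the latter is entire (and odd, vanishing at $z=0$). The two differ by an entire function of the right type, since $\cos\pi z/(4z^2-1)$ is entire, but the reorganisation you allude to is not about ``half-integer behaviour'' of low-index product terms: it is about choosing the explicit correction to be entire so that the leftover $f(z)/z$ is genuinely of Paley--Wiener type with the stated zero-mean normalisation. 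Without making this regularisation explicit your sufficiency argument has a gap at $z=0$.
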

\begin{lem} \label{lem:zero_sum}
The following equality holds:
\begin{equation}
\label{eq:zero_sum}
  \sum_{n=0}^{\infty} \frac{\varphi(x, \lambda_n)}{k_n \gamma_n} = 0
\end{equation}
\end{lem}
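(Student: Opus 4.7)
The plan is to evaluate the sum by residue calculus, mirroring the contour-integral argument already used in the proof of Theorem~\ref{thm:expansion:direct}. Consider
\[ I_N(x) := \frac{1}{2\pi i} \int_{C_N} \frac{\varphi(x,\lambda)}{\chi(\lambda)} \rmd \lambda, \qquad C_N = \{ \lambda : |\lambda| = (N-1/2)^2 \}. \]
Since $\varphi(x,\lambda)$ is entire in $\lambda$ and, by (\ref{eq:spectral_data2}) together with the product representation, the zeros $\{\lambda_n\}$ of $\chi$ are simple, the residue theorem gives
\[ I_N(x) = \sum_{n=0}^{N} \frac{\varphi(x,\lambda_n)}{\dot\chi(\lambda_n)} = \sum_{n=0}^{N} \frac{\varphi(x,\lambda_n)}{k_n \gamma_n}, \]
using the definition $k_n := \dot\chi(\lambda_n)/\gamma_n$. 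Hence it suffices to show that $I_N(x) \to 0$ as $N \to \infty$, uniformly for $x \in [0,\pi]$.

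The first step is to derive the asymptotic $\chi(\lambda) = -s^3 \sin s\pi + O(s^2 \rme^{|\im s\pi|})$ directly from the canonical product of Section~\ref{sec:preliminaries}. After the shift $m = n - 1$, the tail takes the form $\prod_{m=1}^{\infty} m^{-2}(s_{m+1}^2 - s^2)$, and a short calculation shows that (\ref{eq:spectral_data1}) implies $s_{m+1} = m + (\omega/\pi)/m + \alpha_m/m$ with $\{\alpha_m\} \in l_2$. Lemma~\ref{lem:Marchenko} applied with $A = -\omega/\pi$ then yields
\[ \prod_{m=1}^{\infty} \frac{s_{m+1}^2 - s^2}{m^2} = \frac{\sin s\pi}{\pi s} + O\!\left(\frac{\rme^{|\im s\pi|}}{s^2}\right), \]
and multiplication by the quartic prefactor $-\pi(s^2 - s_0^2)(s^2 - s_1^2) = -s^4 + O(s^2)$ produces the required leading term and error.

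The remaining steps are routine. From (\ref{eq:inverse:phi}) and the boundedness of $K(x,t)$, one obtains $|\varphi(x,\lambda)| \le C \rme^{|\im sx|}$. On $C_N$ every $s$ lies at distance at least $1/2$ from $\mathbb{Z}$, so the classical bound $|\sin s\pi| \ge C \rme^{|\im s\pi|}$ holds and therefore $|\chi(\lambda)| \ge C |s|^3 \rme^{|\im s\pi|}$. Consequently
\[ \left| \frac{\varphi(x,\lambda)}{\chi(\lambda)} \right| \le \frac{C}{|s|^3} \rme^{-|\im s|(\pi - x)} \le \frac{C}{|s|^3}, \qquad x \in [0, \pi], \]
while the length of $C_N$ equals $2\pi (N-1/2)^2$. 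Thus $|I_N(x)| = O(1/N) \to 0$, and (\ref{eq:zero_sum}) follows.

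The main obstacle is the first step: in the inverse-problem setting $\chi$ is known only through its canonical product, so deriving (\ref{eq:asymptotics:chi}) requires careful reindexing to match the hypotheses of Lemma~\ref{lem:Marchenko} and correct accounting for the prefactor $(s^2-s_0^2)(s^2-s_1^2)$. Once this asymptotic is available, the contour estimate is essentially identical to that in the proof of Theorem~\ref{thm:expansion:direct}.
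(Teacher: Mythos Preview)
Your proposal is correct and follows essentially the same route as the paper: the paper also writes the partial sum as $\frac{1}{2\pi i}\int_{C_N}\varphi(x,\lambda)/\chi(\lambda)\,\rmd\lambda$, invokes Lemma~\ref{lem:Marchenko} to obtain $\chi(\lambda)=-s^3\sin s\pi+O(|s|^2\rme^{|\im s\pi|})$, and then combines the lower bound $|\sin s\pi|\ge C_\delta\rme^{|\im s\pi|}$ on $G_\delta$ with $|\varphi(x,\lambda)|=O(\rme^{|\im s\pi|})$ to show the contour integral vanishes. Your additional detail on the reindexing $m=n-1$ needed to fit the hypotheses of Lemma~\ref{lem:Marchenko} is a helpful elaboration but not a departure from the paper's argument.
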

\begin{proof}
Using the residue calculus we get:
\begin{equation*}
  \fl \sum_{n=0}^{N} \frac{\varphi(x, \lambda_n)}{k_n \gamma_n} = \sum_{n=0}^{N} \frac{\varphi(x, \lambda_n)}{\dot{\chi}(\lambda_n)} = \sum_{n=0}^{N} \Res_{\lambda=\lambda_n} \frac{\varphi(x, \lambda)}{\chi(\lambda)} = \frac{1}{2 \pi i} \int_{C_N} \frac{\varphi(x, \lambda)}{\chi(\lambda)} \rmd \lambda,
\end{equation*}
where $C_N = \{ \lambda: |\lambda| = (N-1/2)^2 \}$. Lemma~\ref{lem:Marchenko} implies that
\begin{equation*}
  \chi(\lambda) = -s^3 \sin s\pi + O\left( |s|^2 \rme^{|\im s\pi|} \right).
\end{equation*}
We denote $G_{\delta} = \{ s: |s-n| \ge \delta, n = 0, \pm 1, \pm 2, \dots \}$ for some small fixed $\delta > 0$ and recall that (see e.g. \cite[p.~15]{Yurko2001})
\begin{equation*}
  | \sin s\pi | \ge C_{\delta} \rme^{|\im s\pi|}, \quad s \in G_{\delta},
\end{equation*}
where $C_{\delta}$ does not depend on $s$. Therefore we obtain
\begin{equation*}
  | \chi(\lambda) | \ge C_{\delta} |s|^3 \rme^{|\im s\pi|}, \quad s \in G_{\delta}, |s| \ge s_{\delta}
\end{equation*}
for sufficiently large $s_{\delta}$. Since
\begin{equation*}
  | \varphi(x, \lambda) | = O\left( \rme^{|\im s\pi|} \right), \quad x \in [0,\pi],
\end{equation*}
we obtain~(\ref{eq:zero_sum}).
\end{proof}
As in the theory of classical Sturm--Liouville problems (see \cite[Lemma~1.5.8 and Corollary~1.5.1]{Yurko2001}) the following lemmas can be proved:
\begin{lem} \label{lem:inverse:solution}
The following relations hold:
\begin{eqnarray*}
  -\varphi''(x, \lambda) + q(x)\varphi(x, \lambda) = \lambda \varphi(x, \lambda), \\
  \varphi(0, \lambda) = 1, \quad \varphi'(0, \lambda) = h.
\end{eqnarray*}
\end{lem}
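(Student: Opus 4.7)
The plan is to follow the classical Gel'fand--Levitan transformation-operator argument, adapted to the present setting.

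First, the initial conditions are immediate from~(\ref{eq:inverse:phi}). Setting $x=0$ gives $\varphi(0,\lambda)=1$. Differentiating in $x$ yields
\begin{equation*}
\varphi'(x,\lambda) = -s\sin sx + K(x,x)\cos sx + \int_0^x \frac{\partial K}{\partial x}(x,t)\cos st\,\rmd t,
\end{equation*}
and evaluation at $x=0$ gives $\varphi'(0,\lambda) = K(0,0) = h$.

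The substantive step is to establish that $K(x,t)$ satisfies the Goursat-type problem
\begin{equation*}
  K_{xx}(x,t) - K_{tt}(x,t) = q(x)K(x,t),\quad 0<t<x<\pi,
\end{equation*}
together with $K(x,x) = h + \tfrac{1}{2}\int_0^x q(s)\,\rmd s$ and $K_t(x,0) = 0$. The diagonal condition is built in via the definitions $q(x) = 2\frac{d}{dx}K(x,x)$ and $h = K(0,0)$; the vanishing of $K_t(x,0)$ follows by differentiating~(\ref{eq:main_equation}) in $t$ and evaluating at $t=0$, since the cosine structure in~(\ref{eq:def:F}) makes $F_t(x,0)=0$. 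To obtain the hyperbolic equation itself I would use that $F$ is symmetric and satisfies $F_{xx}=F_{tt}$ (each summand $\cos s_n x\cos s_n t$ does). Differentiating~(\ref{eq:main_equation}) twice in each variable and subtracting, then integrating by parts in $\tau$ to absorb the diagonal terms into $q(x)K(x,t)$, one sees that the discrepancy $K_{xx}-K_{tt}-qK$ satisfies a homogeneous Volterra-type equation of the same form as~(\ref{eq:main_equation}); Lemma~\ref{lem:main_equation_uniqueness} then forces it to vanish.

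With the Goursat problem for $K$ in hand, I would substitute~(\ref{eq:inverse:phi}) into $-\varphi''(x,\lambda)+q(x)\varphi(x,\lambda)-\lambda\varphi(x,\lambda)$, rewrite $\lambda\cos st = -\frac{d^2}{dt^2}\cos st$, and integrate by parts twice in $t$. The Goursat PDE cancels the $K_{xx}$ and $qK$ contributions coming from $-\varphi''+q\varphi$, while the boundary conditions $K(x,x)=h+\tfrac{1}{2}\int_0^x q$ and $K_t(x,0)=0$ annihilate the remaining boundary terms at $t=0$ and $t=x$, yielding the differential equation.

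The main obstacle is the rigorous justification of the PDE for $K$ given only the $\mathscr{L}_2$-regularity of $\frac{d}{dx}K(x,x)$ provided by Lemma~\ref{lem:a_x}: the formal second differentiations must be interpreted in a suitable distributional sense, or accessed through a smoothing approximation of the spectral data followed by a limit argument that exploits the continuous dependence from Lemma~\ref{lem:smoothness}. This is precisely the point at which the classical treatment referenced in~\cite[Lemma~1.5.8]{Yurko2001} carries over without essential change.
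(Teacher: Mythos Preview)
Your proposal is correct and follows exactly the classical transformation-operator argument to which the paper defers (the paper gives no self-contained proof here, only the remark that the lemma is proved ``as in the theory of classical Sturm--Liouville problems (see \cite[Lemma~1.5.8 and Corollary~1.5.1]{Yurko2001})''). Your outline---verifying the initial conditions directly from~(\ref{eq:inverse:phi}), deriving the Goursat problem for $K$ from the main equation via $F_{xx}=F_{tt}$ and the uniqueness Lemma~\ref{lem:main_equation_uniqueness}, then integrating by parts to check the differential equation---is precisely the argument in that reference, so there is nothing to add.
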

\begin{lem} \label{lem:inverse:Parseval}
  For any $f(x), g(x) \in \mathscr{L}_2(0,\pi)$ the following equality holds:
\begin{equation*}
  \fl \int_0^{\pi} f(x)g(x)\rmd x = \sum_{n=0}^{\infty} \frac{1}{\gamma_n} \left( \int_0^{\pi} f(t) \varphi(t, \lambda_n) \rmd t \right) \left( \int_0^{\pi} g(t) \varphi(t, \lambda_n) \rmd t \right).
\end{equation*}
\end{lem}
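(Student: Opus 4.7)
By density in $\mathscr{L}_2(0,\pi)$, it suffices to prove the identity for $f,g\in C_0^\infty(0,\pi)$. My plan is to recast both sides in the standard cosine basis via the transformation operator~\eqref{eq:inverse:phi}, reduce the claim to an operator identity involving $F$, $K$, and $H$, and verify this identity using the main equation~\eqref{eq:main_equation}.

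First, by~\eqref{eq:inverse:phi} and Fubini,
\[
a_n(f):=\int_0^\pi f(t)\varphi(t,\lambda_n)\,\rmd t = \int_0^\pi f_1(\tau)\cos s_n\tau\,\rmd\tau, \qquad f_1(\tau):=f(\tau)+\int_\tau^\pi K(t,\tau)f(t)\,\rmd t,
\]
and analogously for $g_1$. Rewriting~\eqref{eq:def:F} as
\[
\sum_{n=0}^\infty\frac{\cos s_n\tau\cos s_n\sigma}{\gamma_n}=F(\tau,\sigma)+\sum_{n=0}^\infty\frac{\cos n\tau\cos n\sigma}{\alpha_n^0},
\]
multiplying by $f_1(\tau)g_1(\sigma)$, integrating over $(0,\pi)^2$, and applying Parseval's equality for the orthonormal cosine basis $\{\cos n\tau/\sqrt{\alpha_n^0}\}$ of $\mathscr{L}_2(0,\pi)$ (the interchange of summation and integration being justified by the $l^2$ asymptotics~\eqref{eq:spectral_data1} and the smoothness of $f_1,g_1$), the claim reduces to
\[
\int_0^\pi fg\,\rmd x = \int_0^\pi f_1g_1\,\rmd\tau + \int_0^\pi\!\int_0^\pi F(\tau,\sigma)f_1(\tau)g_1(\sigma)\,\rmd\tau\,\rmd\sigma.
\]

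To prove this reduced identity, I introduce the Volterra operator $L:=I+K$ on $\mathscr{L}_2(0,\pi)$ with kernel $K(x,t)\mathbf{1}_{t\le x}$, so that $f_1=L^*f$ and $g_1=L^*g$; the reduced identity then becomes the operator equation $L(I+F)L^*=I$. Writing $L^{-1}=I+H$ with $H$ the Volterra kernel from~\eqref{eq:transformation:cos}, and setting $(LF)(x,t):=F(x,t)+\int_0^x K(x,\tau)F(\tau,t)\,\rmd\tau$, this operator equation is equivalent to the kernel identity $(LF)(x,t)=-K(x,t)$ for $t<x$ and $(LF)(x,t)=H(t,x)$ for $t>x$. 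The first case is precisely~\eqref{eq:main_equation}; the second is derived by invoking the symmetry $F(x,t)=F(t,x)$, applying the main equation at the swapped point $(t,x)$, and simplifying via the inverse-Volterra relation $H(x,t)+K(x,t)+\int_t^x K(x,\tau)H(\tau,t)\,\rmd\tau=0$ that follows from $(I+K)(I+H)=I$. The principal obstacle is this second verification: the main equation constrains $F$ only on the lower triangle $\{t<x\}$, and combining the symmetry of $F$ with the inverse-Volterra relation between $K$ and $H$ is essential to propagate the kernel identity to the upper triangle $\{t>x\}$.
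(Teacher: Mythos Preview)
Your overall route is exactly the standard one the paper invokes (it gives no proof of this lemma, only the reference to Yurko's Lemma~1.5.8 and Corollary~1.5.1): pass to the cosine basis via~\eqref{eq:inverse:phi}, and reduce Parseval to the operator identity $L(I+F)L^{*}=I$, whose restriction to the lower triangle $\{t<x\}$ is precisely the main equation~\eqref{eq:main_equation}. The reduction and the lower-triangle verification are correct.

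The gap is in your handling of the upper triangle. The three ingredients you list---symmetry of $F$, the main equation at the swapped point $(t,x)$, and the Volterra relation $K+H+KH=0$---do not combine to give $(LF)(x,t)=H(t,x)$ for $t>x$. If you actually substitute $F(t,x)=-K(t,x)-\int_0^t K(t,s)F(s,x)\,\rmd s$ into the left-hand side, the resulting expression still contains values $F(s,\tau)$ with $s>\tau$ and $s>x$ that the main equation does not control, and the Volterra relation between $K$ and $H$ (which is purely lower-triangular) cannot absorb them. Put differently, symmetry of $F$ only transports the main equation from the lower triangle of $L(I+F)=(L^{*})^{-1}$ to the \emph{upper} triangle of its adjoint $(I+F)L^{*}=L^{-1}$; that is the same information, not a new one.

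The clean fix (and what Yurko actually does) is to use self-adjointness. The main equation says $M:=L(I+F)-I$ has kernel supported on $\{t\ge x\}$. Hence $L(I+F)L^{*}-I=(I+M)(I+K^{*})-I=K^{*}+M+MK^{*}$ also has kernel supported on $\{t\ge x\}$. But $L(I+F)L^{*}-I$ is self-adjoint because $F=F^{*}$, so its kernel is symmetric and therefore supported on the diagonal $\{t=x\}$, a null set; hence it vanishes as an $L^2$ kernel. This yields $L(I+F)L^{*}=I$ directly, and the upper-triangle identity $(LF)(x,t)=H(t,x)$ then follows a posteriori rather than being an input.
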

\begin{prop} \label{prop:basis:cos} (\cite[Proposition~1.8.6]{Yurko2001})
  Let numbers $\{ \rho_n \}_{n \ge 0}$, $\rho_n^2 \ne \rho_k^2$ $(n \ne k)$ of the form
\begin{equation*}
  \rho_n = n + \frac{a}{n} + \frac{\xi_n}{n}, \quad \{\xi_n\} \in l_2, \quad a \in \mathbb{C}
\end{equation*}
be given. Then the sequence $\{ \cos \rho_n x \}_{n \ge 0}$ forms a Riesz basis in the space $\mathscr{L}_2(0,\pi)$.
\end{prop}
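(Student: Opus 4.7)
The plan is to combine Bari's theorem on quadratic closeness with a completeness argument based on entire function theory. Recall that $\{\cos nx\}_{n \ge 0}$ is an orthogonal, and hence Riesz, basis of $\mathscr{L}_2(0,\pi)$. Bari's theorem then reduces the claim to two sub-tasks: (i) the quadratic-closeness estimate $\sum_{n \ge 0}\|\cos \rho_n x - \cos nx\|_{\mathscr{L}_2(0,\pi)}^2 < \infty$, and (ii) completeness of $\{\cos \rho_n x\}_{n \ge 0}$ in $\mathscr{L}_2(0,\pi)$.

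Task (i) is elementary. From $|\cos \rho_n x - \cos nx| \le |\rho_n - n|\,x \le \pi|\rho_n - n|$ on $[0,\pi]$ one gets $\|\cos \rho_n x - \cos nx\|_{\mathscr{L}_2(0,\pi)}^2 \le (\pi^3/3)|\rho_n - n|^2$, and the hypothesis $\rho_n - n = a/n + \xi_n/n$ (for $n \ge 1$) with $\{\xi_n\} \in l_2$ makes the series converge --- the $n=0$ term being a single bounded summand, since $\rho_0$ does not enter the asymptotic formula.

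Task (ii) is the substantive part. Assume $g \in \mathscr{L}_2(0,\pi)$ is orthogonal to every $\cos \rho_n x$, and set $F(s) := \int_0^\pi g(t) \cos st \,\rmd t$. By Paley--Wiener, $F$ is an even entire function of exponential type at most $\pi$, belongs to $L^2(\mathbb{R})$, and vanishes at $\pm \rho_n$ for all $n \ge 0$. I would identify $\{\rho_n\}_{n \ge 1}$ with the $u$-side of Lemma~\ref{lem:Marchenko} via $A = -a$ and $\alpha_n = \xi_n$, thereby introducing
\begin{equation*}
  u(s) = \pi s \prod_{n=1}^{\infty} \frac{\rho_n^2 - s^2}{n^2},
\end{equation*}
an odd entire function of exponential type $\pi$ with the sharp asymptotic $u(s) = \sin \pi s + O(|s|^{-1}\rme^{|\im s|\pi})$ furnished by that lemma. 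Setting $v(s) := (\rho_0^2 - s^2)u(s)/s$ yields an even entire function of the same type $\pi$ whose zero set is exactly $\{\pm\rho_n\}_{n \ge 0}$, and which satisfies $|v(s)| \ge C_\delta |s|\,\rme^{|\im s|\pi}$ on $G_\delta$ for large $|s|$, exactly as exploited in the proof of Lemma~\ref{lem:zero_sum}. The quotient $F(s)/v(s)$ is therefore entire, and on the circles $C_N = \{|s| = N - 1/2\} \subset G_{1/2}$ the type-$\pi$ bound on $F$ combines with the lower bound on $v$ to give $|F(s)/v(s)| = O(1/N)$. Letting $N \to \infty$ forces $F/v \equiv 0$ by the maximum modulus principle, so $F \equiv 0$ and $g = 0$ by injectivity of the cosine transform.

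The main obstacle is this last step --- constructing $v$ with the correct exponential type $\pi$ together with the uniform lower bound off its zero set. Lemma~\ref{lem:Marchenko} is tailor-made for exactly this purpose, and once the exceptional zero $\rho_0$ is absorbed into the explicit polynomial factor $(\rho_0^2 - s^2)/s$, the Phragmén--Lindelöf/Liouville argument closes the proof without further surprises.
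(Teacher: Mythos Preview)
Your argument is sound. The combination of Bari's theorem (quadratic closeness to the orthogonal system $\{\cos nx\}_{n\ge0}$ together with completeness, the latter being equivalent to $\omega$-independence via the Fredholm alternative for $I+\text{compact}$) is exactly the standard route, and your completeness step---building the canonical product $v(s)=(\rho_0^2-s^2)u(s)/s$ from Lemma~\ref{lem:Marchenko}, exploiting the lower bound $|v(s)|\ge C_\delta|s|\,\rme^{|\im s|\pi}$ on $G_\delta$ as in Lemma~\ref{lem:zero_sum}, and finishing with Liouville on the circles $|s|=N-1/2$---is carried out correctly. One cosmetic remark: in task~(i) your inequality $|\cos\rho_n x-\cos nx|\le|\rho_n-n|\,x$ tacitly assumes $\rho_n$ is real; when $a\in\mathbb{C}$ the bound acquires a harmless factor $\rme^{|\im\rho_n|\pi}$, which is uniformly bounded since $\im\rho_n=O(1/n)$, so the conclusion is unaffected.

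As for comparison with the paper: there is nothing to compare. The paper does not prove Proposition~\ref{prop:basis:cos} at all---it is imported verbatim from \cite[Proposition~1.8.6]{Yurko2001} and used as a black box (in Lemma~\ref{lem:main_equation_uniqueness} and Corollary~\ref{cor:basis:inverse}). Your write-up therefore supplies what the paper omits, and it does so along the same lines one finds in Yurko's book: quadratic closeness plus an entire-function completeness argument.
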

\begin{cor} \label{cor:basis:inverse}
  For any fixed $n_0 \in \mathbb{Z}_{+}$ the system $\{ \varphi(x, \lambda_n) \} (n \ne n_0)$ forms a Riesz basis in the space $\mathscr{L}_2(0,\pi)$.
\end{cor}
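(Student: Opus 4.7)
My plan is to reduce the Riesz basis property of $\{\varphi(x,\lambda_n)\}_{n \ne n_0}$ to that of a cosine system via the transformation operator, and then to invoke Proposition~\ref{prop:basis:cos}. Accordingly, I would first consider the operator $T$ on $\mathscr{L}_2(0,\pi)$ defined by $(Tf)(x) := f(x) + \int_0^x K(x,t) f(t) \rmd t$. Being the identity plus a Volterra operator with continuous kernel, $T$ is a bounded isomorphism (the analogous operator built from the kernel $H$ in~(\ref{eq:transformation:cos}) provides a two-sided inverse on the dense span of $\{\cos sx\}$ and extends by continuity). By~(\ref{eq:transformation:phi}), $T(\cos s_n x) = \varphi(x,\lambda_n)$ for every $n$, so $T$ sends $\{\cos s_n x\}_{n \ne n_0}$ bijectively onto $\{\varphi(x,\lambda_n)\}_{n \ne n_0}$. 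Since bounded isomorphisms preserve the Riesz basis property, it suffices to prove that $\{\cos s_n x\}_{n \ne n_0}$ is a Riesz basis in $\mathscr{L}_2(0,\pi)$.

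For this, I would reindex $\{s_n\}_{n \ne n_0}$ as $\{\tilde s_k\}_{k \ge 0}$ via $\tilde s_k := s_k$ for $k < n_0$ and $\tilde s_k := s_{k+1}$ for $k \ge n_0$. A short expansion of~(\ref{eq:asymptotics:s_n}), using $\omega/((k+1)\pi) = \omega/(k\pi) - \omega/(k(k+1)\pi)$ to push a $1/k^2$ remainder into the error term, yields
\begin{equation*}
  \tilde s_k = k + \frac{\omega/\pi}{k} + \frac{\eta_k}{k}, \quad \{\eta_k\} \in l_2,
\end{equation*}
for large $k$; the finitely many early entries are irrelevant to this asymptotic, and the distinctness $\tilde s_k^2 \ne \tilde s_j^2$ follows from that of the eigenvalues $\lambda_n = s_n^2$. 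Proposition~\ref{prop:basis:cos} then yields that $\{\cos \tilde s_k x\}_{k \ge 0}$ is a Riesz basis of $\mathscr{L}_2(0,\pi)$, which is all that remained to prove.

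I do not foresee any deep obstacle. The only points requiring a moment of care are the bounded invertibility of the Volterra factor $T$ (immediate from the explicit inverse, or from the fact that Volterra integral operators with bounded kernels are quasinilpotent) and matching the reindexed asymptotic to the hypothesis of Proposition~\ref{prop:basis:cos}; both are routine. Conceptually, the whole content of the corollary is that the overcomplete system $\{\cos s_n x\}_{n \ge 0}$ (which has one ``extra'' element compared to $\{\cos kx\}_{k \ge 0}$, because $s_n \sim n-1$ rather than $\sim n$) becomes exactly a Riesz basis once any single term is dropped, and the Volterra transformation carries this over to $\{\varphi(x,\lambda_n)\}$.
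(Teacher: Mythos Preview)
Your proof is correct and follows essentially the same route as the paper: use the Volterra transformation operator to transfer the Riesz basis question from $\{\varphi(x,\lambda_n)\}_{n\ne n_0}$ to $\{\cos s_n x\}_{n\ne n_0}$, and then appeal to Proposition~\ref{prop:basis:cos}. The paper's own argument is a single sentence invoking the representation~(\ref{eq:transformation:cos}) and the resulting one-to-one correspondence of expansions; you have simply made explicit the two points the paper leaves to the reader, namely the bounded invertibility of the Volterra operator and the reindexing $\tilde s_k = s_{k+1}$ needed to bring $\{s_n\}_{n\ne n_0}$ (with $s_n\sim n-1$) into the form $\rho_k = k + a/k + \xi_k/k$ required by Proposition~\ref{prop:basis:cos}.
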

\begin{proof}
  According to Lemma~\ref{lem:inverse:solution} we can write representation of the form~(\ref{eq:transformation:cos}). Therefore, there is one-to-one correspondence between expansions in $\{ \cos s_n x \} (n \ne n_0)$ and $\{ \varphi(x, \lambda_n) \} (n \ne n_0)$.
\end{proof}
\begin{lem}
  For any $f(x) \in \mathscr{W}_2^2(0,\pi)$, the expansion
\begin{equation}
\label{eq:expansion:inverse}
  f(x) = \sum_{n=0}^{\infty} \left( \frac{1}{\gamma_n} \int_0^{\pi} f(t) \varphi(t, \lambda_n) \rmd t \right) \varphi(x, \lambda_n)
\end{equation}
holds.
\end{lem}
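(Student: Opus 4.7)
The plan is to establish~(\ref{eq:expansion:inverse}) with absolute uniform convergence on $[0,\pi]$, following a two-part strategy: first prove convergence of the series (using the $\mathscr{W}_2^2$-smoothness to obtain sharp decay of the Fourier coefficients $c_n := \int_0^\pi f(t)\varphi(t,\lambda_n)\,\rmd t$), and then identify its continuous limit with $f$ via Parseval's equality (Lemma~\ref{lem:inverse:Parseval}) together with the induced completeness of $\{\varphi(\cdot,\lambda_n)\}_{n \ge 0}$ in $\mathscr{L}_2(0,\pi)$.

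For the decay of the coefficients, I would use Lemma~\ref{lem:inverse:solution} and the initial conditions $\varphi(0,\lambda_n)=1$, $\varphi'(0,\lambda_n)=h$ to integrate $\lambda_n\int f \varphi = \int f(-\varphi''+q\varphi)$ by parts twice, obtaining
\begin{equation*}
  \lambda_n c_n = f'(\pi)\varphi(\pi,\lambda_n) - f(\pi)\varphi'(\pi,\lambda_n) + hf(0) - f'(0) + \int_0^\pi (qf-f'')\varphi(t,\lambda_n)\,\rmd t.
\end{equation*}
The transformation formula~(\ref{eq:transformation:phi}), continuity of $K(x,t)$, and the asymptotics~(\ref{eq:spectral_data1}) (using $\sin s_n\pi = O(1/n)$, hence $s_n \sin s_n\pi = O(1)$) yield $\varphi(\pi,\lambda_n),\,\varphi'(\pi,\lambda_n) = O(1)$, while the remaining integral is $O(1)$ by the Cauchy--Schwarz inequality applied to $qf-f'' \in \mathscr{L}_2(0,\pi)$ and the uniformly $\mathscr{L}_2$-bounded $\varphi(\cdot,\lambda_n)$. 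Hence $\lambda_n c_n = O(1)$, giving $c_n = O(n^{-2})$.

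Since $|\varphi(x,\lambda_n)| \le C$ on $[0,\pi]$ (again from~(\ref{eq:transformation:phi}) and the continuity of $K$) and $\gamma_n \to \pi/2$, the decay $c_n = O(n^{-2})$ makes the series in~(\ref{eq:expansion:inverse}) converge absolutely and uniformly to some continuous $g$ on $[0,\pi]$. To identify $g$ with $f$, substitute $g = \varphi(\cdot,\lambda_m)$ into Parseval's equality (Lemma~\ref{lem:inverse:Parseval}): this gives the scalar identity
\begin{equation*}
  c_m = \sum_{n=0}^\infty \frac{c_n}{\gamma_n}\int_0^\pi \varphi(t,\lambda_n)\varphi(t,\lambda_m)\,\rmd t \qquad (m \ge 0).
\end{equation*}
Uniform convergence permits exchanging sum and integral in $(g,\varphi(\cdot,\lambda_m)) = \lim_N (S_N,\varphi(\cdot,\lambda_m))$, yielding $(g,\varphi(\cdot,\lambda_m)) = c_m = (f,\varphi(\cdot,\lambda_m))$ for all $m$. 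Parseval's equality then gives $\|f-g\|_{\mathscr{L}_2}^2 = \sum_{n=0}^\infty (f-g,\varphi(\cdot,\lambda_n))^2/\gamma_n = 0$, so $f = g$ almost everywhere and, by continuity (since $\mathscr{W}_2^2 \subset C[0,\pi]$), everywhere on $[0,\pi]$.

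The main technical hurdle will be the estimate $\lambda_n c_n = O(1)$: because the reconstruction at this point has imposed no boundary condition on $\varphi(x,\lambda_n)$ at $x = \pi$, the term $f(\pi)\varphi'(\pi,\lambda_n)$ cannot be annihilated as in the classical Sturm--Liouville theory, and one must rely squarely on the fine cancellation $s_n\sin s_n\pi = O(1)$ furnished by~(\ref{eq:spectral_data1}) to keep this boundary contribution bounded rather than of order $n$.
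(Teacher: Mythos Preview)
Your proposal is correct and follows essentially the same strategy as the paper: integrate by parts using Lemma~\ref{lem:inverse:solution} to obtain $c_n = O(n^{-2})$ (the paper merely asserts this without spelling out the boundary estimate $s_n\sin s_n\pi = O(1)$ that you correctly flag as the crux), deduce absolute uniform convergence, and then identify the limit with $f$ via Lemma~\ref{lem:inverse:Parseval}. The only cosmetic difference is the final step: the paper tests against an arbitrary $g\in\mathscr{L}_2(0,\pi)$ to get $\int fg=\int f^{*}g$ directly, whereas you test against each $\varphi(\cdot,\lambda_m)$ and then invoke Parseval a second time for $f-g$; both routes are equivalent and equally short.
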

\begin{proof}
  Consider the series
\begin{equation}
\label{eq:inverse:f*}
  f^{*}(x) = \sum_{n=0}^{\infty} c_n \varphi(x, \lambda_n),
\end{equation}
where
\begin{equation*}
  c_n := \frac{1}{\gamma_n} \int_0^{\pi} f(t) \varphi(t, \lambda_n) \rmd t.
\end{equation*}
Using Lemma~\ref{lem:inverse:solution} and integrating by parts we obtain:
\begin{eqnarray*}
  \fl c_n = \frac{1}{\gamma_n \lambda_n} \int_0^{\pi} f(t) \left( -\varphi''(t, \lambda_n) + q(t) \varphi(t, \lambda_n) \right) \rmd t \\
  \lo{=} \frac{1}{\gamma_n \lambda_n} \left( h f(0) - f'(0) + f'(\pi) \varphi(\pi, \lambda_n) - f(\pi) \varphi'(\pi, \lambda_n) \right) \\
  + \frac{1}{\gamma_n \lambda_n} \int_0^{\pi} \varphi(t, \lambda_n) ( -f''(t) + q(t)f(t)) \rmd t.
\end{eqnarray*}
We can easily prove that as $n \to \infty$
\begin{equation*}
  c_n = O\left( \frac{1}{n^2} \right), \quad \varphi(t, \lambda_n) = O(1)
\end{equation*}
uniformly on $t \in [0,\pi]$. Therefore, the series~(\ref{eq:inverse:f*}) converges absolutely and uniformly on $x \in [0,\pi]$. According to Lemma~\ref{lem:inverse:Parseval}
\begin{eqnarray*}
  \fl \int_0^{\pi} f(x)g(x)\rmd x = \sum_{n=0}^{\infty} c_n \int_0^{\pi} g(t) \varphi(t, \lambda_n) \rmd t = \int_0^{\pi} g(t) \sum_{n=0}^{\infty} c_n \varphi(t, \lambda_n) \rmd t \\
  = \int_0^{\pi} g(t)f^{*}(t)\rmd t.
\end{eqnarray*}
Since $g(x)$ can be chosen arbitrarily, we conclude that $f^{*}(x) = f(x)$.
\end{proof}

We can write~(\ref{eq:zero_sum}) as
\begin{equation*}
  \frac{\varphi(x, \lambda_{n_0})}{\gamma_{n_0}} = -\sum_{n \ne n_0} \frac{k_{n_0} \varphi(x, \lambda_n)}{k_n \gamma_n}
\end{equation*}
for any $n_0 \in \mathbb{Z}_{+}$. Let $m \ne n_0$ be any fixed number and $f(x) = \varphi(x, \lambda_m)$. Then using the above equality in~(\ref{eq:expansion:inverse}), we have:
\begin{equation*}
  \varphi(x, \lambda_m) = \sum_{n \ne n_0} c_{mn} \varphi(x, \lambda_n),
\end{equation*}
where
\begin{equation*}
  c_{mn} = \frac{1}{\gamma_n} \int_0^{\pi} \varphi(t, \lambda_m) \left( \varphi(t, \lambda_n) - \frac{k_{n_0}}{k_n} \varphi(t, \lambda_{n_0}) \right) \rmd t.
\end{equation*}
Corollary~\ref{cor:basis:inverse} implies $c_{mn} = \delta_{mn}$. Here $\delta_{mn}$ is the Kronecker delta. In other words, denoting $a_{mn} := \int_0^{\pi} \varphi(t, \lambda_m) \varphi(t, \lambda_n) \rmd t$, we have:
\begin{equation*}
  a_{mm} - \frac{k_n}{k_m}a_{mn} = \gamma_m, \quad m \ne n.
\end{equation*}
It's clear (from definition) that $a_{mn} = a_{nm}$. Using these relations we calculate:
\begin{equation*}
  \fl k_m^2 (\gamma_m - a_{mm}) = - k_m k_n a_{mn} = - k_n k_m a_{nm} = k_n^2 (\gamma_n - a_{nn}), \quad m \ne n.
\end{equation*}
Therefore $k_n^2 (\gamma_n - a_{nn}) = \const$. Let's denote this constant by $\rho$. Then we have:
\begin{equation*}
  \fl \int_0^{\pi} \varphi^2(t, \lambda_n) \rmd t = \gamma_n - \frac{\rho}{k_n^2}, \qquad \int_0^{\pi} \varphi(t, \lambda_m) \varphi(t, \lambda_n) \rmd t = - \frac{\rho}{k_m k_n}, \quad m \ne n.
\end{equation*}
Now using the equality
\begin{equation*}
  \varphi(\pi, \lambda) \varphi'(\pi, \mu) - \varphi'(\pi, \lambda) \varphi(\pi, \mu) = (\lambda - \mu) \int_0^{\pi} \varphi(t, \lambda) \varphi(t, \mu) \rmd t
\end{equation*}
we write:
\begin{equation*}
  \frac{k_n \varphi(\pi, \lambda_n) k_m \varphi'(\pi, \lambda_m) - k_n \varphi'(\pi, \lambda_n) k_m \varphi(\pi, \lambda_m)}{\lambda_n - \lambda_m} = -\rho, \quad n \ne m.
\end{equation*}
Denoting $A_n := k_n \varphi(\pi, \lambda_n)$, $B_n := k_n \varphi'(\pi, \lambda_n)$ we can write the above equality as
\begin{equation}
\label{eq:A_B}
  A_n B_m - B_n A_m = \rho ( \lambda_m - \lambda_n ), \quad n \ne m.
\end{equation}
Let $i$, $j$, $m$ and $n$ be pairwise distinct nonnegative integers. By summing the equalities
\begin{equation*}
  A_n B_m - B_n A_m = \rho ( \lambda_m - \lambda_n ),
\end{equation*}
\begin{equation*}
  A_m B_i - B_m A_i = \rho ( \lambda_i - \lambda_m ),
\end{equation*}
\begin{equation*}
  A_i B_n - B_i A_n = \rho ( \lambda_n - \lambda_i ),
\end{equation*}
we have:
\begin{equation*}
  A_n (B_m - B_i) + B_n (A_i - A_m) = B_m A_i - A_m B_i.
\end{equation*}
Writing this equality again, but this time with $n$ replaced by $j$ and subtracting them we finally obtain:
\begin{equation*}
  (A_n - A_j)(B_m - B_i) = (A_m - A_i)(B_n - B_j).
\end{equation*}

If $B_n = B_j$ for some $n,j \in \mathbb{Z}_{+}$, then $B_n = \const$. In this case~(\ref{eq:A_B}) implies $A_n = \varkappa_1 \lambda_n + \varkappa_2$ with some constants $\varkappa_1$ and $\varkappa_2$. Continuing this procedure for the case $B_n \ne B_j$ we obtain $A_n = \varkappa_1 \lambda_n + \varkappa_2$ and $B_n = \varkappa_3 \lambda_n + \varkappa_4$.

So in both cases
\begin{equation*}
  k_n \varphi(\pi, \lambda_n) = \varkappa_1 \lambda_n + \varkappa_2, \quad k_n \varphi'(\pi, \lambda_n) = \varkappa_3 \lambda_n + \varkappa_4.
\end{equation*}
Using~(\ref{eq:spectral_data1}),~(\ref{eq:inverse:phi}) and Lemma~\ref{lem:Marchenko} we calculate:
\begin{equation*}
  \fl k_n = (-1)^n n^2 + O(n), \quad \varphi(\pi, \lambda_n) = (-1)^{n-1} + O(\frac{1}{n}), \quad \lambda_n = n^2 + O(n).
\end{equation*}
Therefore $\varkappa_1 = -1$. Denoting $H_1 := \varkappa_2$, $H := -\varkappa_3$, $H_2 := \varkappa_4$ we obtain:
\begin{equation} \label{eq:inverse:H}
  \fl \lambda_n (\varphi'(\pi, \lambda_n) + H \varphi(\pi, \lambda_n)) = H_1 \varphi'(\pi, \lambda_n) + H_2 \varphi(\pi, \lambda_n), \quad n \in \mathbb{Z}_{+}
\end{equation}
for some constants $H$, $H_1$ and $H_2$.
From~(\ref{eq:A_B}) we have:
\begin{equation*}
  H H_1 - H_2 = \rho.
\end{equation*}
Hence we have proved
\begin{thm} \label{thm:main_theorem}
  For the sequences $\{\lambda_n\}$ and $\{\gamma_n\}$ $(n \in \mathbb{Z}_{+})$ to be the spectral data of a problem of the type~(\ref{eq:main})--(\ref{eq:boundary2}) it is necessary and sufficient to satisfy conditions~(\ref{eq:spectral_data1})--(\ref{eq:spectral_data2}).
\end{thm}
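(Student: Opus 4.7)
The plan is to split the argument into necessity (which is essentially read off the earlier sections) and sufficiency (which is assembled from the constructive machinery of Sections~\ref{sec:main_equation}--\ref{sec:reconstruction1}). For necessity, I would note that (\ref{eq:spectral_data1}) is precisely Theorem~\ref{thm:asymptotics}; that distinctness of the $\lambda_n$ reflects simplicity of the eigenvalues (a consequence of $\dot{\chi}(\lambda_n) = k_n\gamma_n \ne 0$); and that $\gamma_n = \|\Phi_n\|^2 > 0$ follows from positive-definiteness of the inner product on $\mathcal{H}$, which in turn uses $\rho > 0$.

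For sufficiency I would follow the road map already traversed: assemble $F(x,t)$ by~(\ref{eq:def:F}) (Lemma~\ref{lem:a_x} supplies the requisite smoothness), solve the main equation~(\ref{eq:main_equation}) for $K(x,t)$ (Lemma~\ref{lem:main_equation_uniqueness} provides unique $\mathscr{L}_2$-solvability, while Lemma~\ref{lem:smoothness} applied after the rescaling $t\mapsto tx$, $\tau\mapsto\tau x$ transfers the needed smoothness), and then define $q(x) := 2\frac{d}{dx}K(x,x)$, $h := K(0,0)$, $\varphi(x,\lambda)$ by~(\ref{eq:inverse:phi}), $\chi(\lambda)$ by its product formula, and $k_n := \dot{\chi}(\lambda_n)/\gamma_n$. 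Lemma~\ref{lem:inverse:solution} then certifies that $\varphi$ satisfies~(\ref{eq:main}) together with the first boundary condition~(\ref{eq:boundary1}).

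The heart of the argument is to manufacture constants $H$, $H_1$, $H_2$ so that the parameter-dependent boundary condition~(\ref{eq:boundary2}) is met at $x=\pi$ by each eigenfunction $\varphi(\cdot,\lambda_n)$. This is the content of the calculation immediately preceding the theorem: Lemma~\ref{lem:inverse:Parseval}, the Riesz basis Corollary~\ref{cor:basis:inverse}, and the zero-sum identity (Lemma~\ref{lem:zero_sum}) force the cross integrals $a_{mn}$ into a symmetric relation that collapses to $k_n^2(\gamma_n - a_{nn}) = \rho$ for a single constant $\rho$; the algebraic identity~(\ref{eq:A_B}), taken for four distinct indices, forces $A_n := k_n\varphi(\pi,\lambda_n)$ and $B_n := k_n\varphi'(\pi,\lambda_n)$ to be affine functions of $\lambda_n$; and matching leading-order asymptotics via Lemma~\ref{lem:Marchenko} fixes the leading slope at $-1$ and identifies $H, H_1, H_2$ as the remaining coefficients, with $H H_1 - H_2 = \rho$ again by~(\ref{eq:A_B}).

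The step I expect to be the main obstacle is establishing $\rho > 0$, because the hypotheses~(\ref{eq:spectral_data1})--(\ref{eq:spectral_data2}) do not announce it directly. My plan is to deduce it from the relation $\gamma_n - a_{nn} = \rho/k_n^2$: the finer asymptotic expansions of $\gamma_n$ (from the hypothesis) and of $a_{nn}$ (computed from~(\ref{eq:inverse:phi}) via Parseval) agree to leading order, so that the sign of their difference is pinned down by the $1/n$-corrections and yields $\rho > 0$. Once this is secured, defining $\psi(x,\lambda)$ from the initial conditions in~(\ref{eq:phi_psi}) with the manufactured $H, H_1, H_2$, verifying $\psi(x,\lambda_n) = k_n\varphi(x,\lambda_n)$ by means of~(\ref{eq:inverse:H}) and the definition of $k_n$, and reading the product formula for $\chi$ together with $\dot{\chi}(\lambda_n) = k_n \gamma_n$ backwards, confirms that $\{\lambda_n, \gamma_n\}$ are exactly the spectral data of the constructed problem $\mathcal{P}(q,h,H,H_1,H_2)$.
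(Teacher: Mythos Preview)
Your plan tracks the paper's argument almost verbatim: necessity from Theorem~\ref{thm:asymptotics} and the definition of~$\gamma_n$; sufficiency by constructing $F$, solving~(\ref{eq:main_equation}), defining $q,h,\varphi,\chi,k_n$, and then running the algebra on $a_{mn}$, $A_n$, $B_n$ exactly as in Section~\ref{sec:reconstruction1}. On that level the proposal is correct and matches the paper.

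The one place where your plan diverges is the verification of $\rho>0$, and there your proposed method will not work. From $\gamma_n-a_{nn}=\rho/k_n^2$ and $k_n=(-1)^n n^2+O(n)$ you get $\gamma_n-a_{nn}=O(n^{-4})$. The hypotheses~(\ref{eq:spectral_data1}) and the expansion of $a_{nn}$ obtained from~(\ref{eq:inverse:phi}) each pin down only the $\pi/2$ term plus an $l_2$-controlled $O(n^{-1})$ correction; those $O(n^{-1})$ corrections necessarily cancel (they are both produced by the same $q$ and $h$), and nothing in the data lets you resolve the sign of a residual of order $n^{-4}$. So ``reading off the sign from the $1/n$-corrections'' cannot succeed.

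The paper in fact does not spell out the $\rho>0$ step at all; it passes directly from $HH_1-H_2=\rho$ to ``Hence we have proved''. A clean way to close this gap with the tools already on the table: apply Lemma~\ref{lem:inverse:Parseval} with $f=g=\varphi(\cdot,\lambda_m)$ to obtain
\[
a_{mm}=\sum_{n\ge 0}\frac{a_{mn}^2}{\gamma_n}=\frac{a_{mm}^2}{\gamma_m}+\sum_{n\ne m}\frac{a_{mn}^2}{\gamma_n},
\]
whence $a_{mm}(\gamma_m-a_{mm})\ge 0$ and so $\rho\ge 0$. If $\rho=0$ then $a_{mn}=0$ for all $m\ne n$, i.e.\ the $\varphi(\cdot,\lambda_n)$ are mutually orthogonal in $\mathscr{L}_2(0,\pi)$; but by Corollary~\ref{cor:basis:inverse} the family with any single index removed is already a Riesz basis, so the removed function, being orthogonal to a complete system, would have to vanish. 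This contradiction gives $\rho>0$. With that in hand, the rest of your closing paragraph (identifying $\psi$, matching $\chi$, and recovering the norming constants) goes through.
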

\begin{exmp}
  Assume that the spectral data of some eigenvalue problem of the form~(\ref{eq:main})--(\ref{eq:boundary2}) is the following:
\begin{equation*}
  \lambda_0 = 0, \quad \lambda_1 = \frac{1}{4}, \qquad \lambda_n = (n-1)^2, \quad n \ge 2,
\end{equation*}
\begin{equation*}
  \gamma_0 = \pi, \qquad \gamma_n = \frac{\pi}{2}, \quad n \ge 1.
\end{equation*}
Then from~(\ref{eq:def:F}) we have:
\begin{equation*}
  F(x,t) = \frac{2}{\pi} \cos \frac{x}{2} \cos \frac{t}{2}.
\end{equation*}
Solving the equation~(\ref{eq:main_equation}) and then using the relation~(\ref{eq:K_x_x}) we obtain:
\begin{equation*}
  \fl K(x,t) = - \frac{2 \cos \frac{x}{2} \cos \frac{t}{2}}{\pi + x + \sin x}, \quad q(x) = \frac{2(\pi + x) \sin x + 4(1 + \cos x)}{(\pi + x + \sin x)^2}, \quad h = -\frac{2}{\pi}.
\end{equation*}
In order to reconstruct the second boundary condition, we construct the solution $\varphi(x, \lambda)$ using~(\ref{eq:inverse:phi}):
\begin{equation*}
  \varphi(x, \lambda) = \left\{
\begin{array}{ll}
  \cos sx - \frac{\displaystyle 4s \sin sx (1 + \cos x) - \cos sx \sin x}{\displaystyle (4s^2 - 1)(\pi + x + \sin x)}, & \lambda \ne \frac{1}{4}, \\
  \frac{\displaystyle \pi \cos \frac{x}{2}}{\displaystyle \pi + x + \sin x}, & \lambda = \frac{1}{4}.
\end{array}
\right.
\end{equation*}
Then from~(\ref{eq:inverse:H}) we have:
\begin{equation*}
  8 \pi H n^4 - (2 \pi H + 8 \pi H_2 + 1) n^2 + H_1 + 2 \pi H_2 = 0, \quad n = 0, 1, \dots.
\end{equation*}
From these equalities we finally calculate the coefficients of the second boundary condition:
\begin{equation*}
  H = 0, \quad H_1 = \frac{1}{4}, \quad H_2 = - \frac{1}{8 \pi}.
\end{equation*}
\end{exmp}

\section{On two problems with common parameter dependent boundary condition}
\label{sec:two_spectra}

Consider two eigenvalue problems for the equation
\begin{equation}
\label{eq:reconstruction2:main}
  -y''(x) + q(x)y(x) = \lambda y(x)
\end{equation}
with boundary conditions
\begin{eqnarray}
  y'(0) - hy(0) = 0, & \qquad \lambda (y'(\pi) + H y(\pi)) = H_1 y'(\pi) + H_2 y(\pi), \label{eq:reconstruction2:boundary1} \\
  y'(0) - \widetilde{h}y(0) = 0, & \qquad \lambda (y'(\pi) + H y(\pi)) = H_1 y'(\pi) + H_2 y(\pi), \label{eq:reconstruction2:boundary2}
\end{eqnarray}
where $q(x) \in \mathscr{L}_2(0,\pi)$ is a real-valued function, $h, \widetilde{h}, H, H_1, H_2 \in \mathbb{R}$ and
\begin{equation*}
  \rho := H H_1 - H_2 > 0.
\end{equation*}
We can assume without loss of generality that $h < \widetilde{h}$. Denote by $\lambda_0 < \lambda_1 < \lambda_2 < \dots$ and $\mu_0 < \mu_1 < \mu_2 < \dots$ the eigenvalues of the problems~(\ref{eq:reconstruction2:main}),~(\ref{eq:reconstruction2:boundary1}) and~(\ref{eq:reconstruction2:main}),~(\ref{eq:reconstruction2:boundary2}), respectively.

Let $\varphi(x, \lambda)$ and $\psi(x, \lambda)$ be the solutions of equation~(\ref{eq:reconstruction2:main}) satisfying
\begin{equation*}
  \varphi(0, \lambda) = 1, \quad \varphi'(0, \lambda) = h, \qquad \psi(0, \lambda) = 1, \quad \psi'(0, \lambda) = \widetilde{h}.
\end{equation*}
Eigenvalues of the problems~(\ref{eq:reconstruction2:main}),~(\ref{eq:reconstruction2:boundary1}) and~(\ref{eq:reconstruction2:main}),~(\ref{eq:reconstruction2:boundary2}) coincide with the zeros of the functions
\begin{equation*}
  \Phi(\lambda) := \lambda (\varphi'(\pi, \lambda) + H \varphi(\pi, \lambda)) - H_1 \varphi'(\pi, \lambda) - H_2 \varphi(\pi, \lambda),
\end{equation*}
\begin{equation*}
  \Psi(\lambda) := \lambda (\psi'(\pi, \lambda) + H \psi(\pi, \lambda)) - H_1 \psi'(\pi, \lambda) - H_2 \psi(\pi, \lambda),
\end{equation*}
respectively. We denote $f(x, \lambda) = \psi(x, \lambda) + m(\lambda) \varphi(x, \lambda)$ and choose $m(\lambda)$ such that
\begin{equation*}
  \lambda (f'(\pi, \lambda) + H f(\pi, \lambda)) - H_1 f'(\pi, \lambda) - H_2 f(\pi, \lambda) = 0.
\end{equation*}
Then
\begin{equation*}
  m(\lambda) = - \frac{\Psi(\lambda)}{\Phi(\lambda)}.
\end{equation*}
Using Green's formula, we can write
\begin{equation*}
  \int_0^{\pi} f(x, \lambda) f(x, \mu) \rmd x = - \frac{\rho f(\pi, \lambda) f(\pi, \mu)}{(H_1 - \lambda)(H_1 - \mu)} + (h - \widetilde{h}) \frac{m(\lambda) - m(\mu)}{\lambda - \mu}.
\end{equation*}
From here when $\mu \to \lambda$ it follows that:
\begin{equation*}
  \int_0^{\pi} f^2(x, \lambda) \rmd x + \frac{\rho f^2(\pi, \lambda)}{(H_1 - \lambda)^2} = (h - \widetilde{h}) \dot{m}(\lambda).
\end{equation*}
Since the left-hand side of the last equality is always positive, the function $m(\lambda)$ monotonically decreases in the set $\mathbb{R} \setminus \{ \lambda_n | n \in \mathbb{Z}_{+} \}$. Therefore, zeros and poles of $m(\lambda)$ interlace and according to~(\ref{eq:asymptotics:s_n}) we obtain:
\begin{equation*}
  \lambda_0 < \mu_0 < \lambda_1 < \mu_1 < \lambda_2 < \mu_2 < \dots.
\end{equation*}
Now we use Green's formula again:
\begin{eqnarray*}
  \fl (\lambda - \lambda_n) \int_0^{\pi} f(x, \lambda) \varphi(x, \lambda_n) \rmd x = \left. (f(x, \lambda) \varphi'(x, \lambda_n) - f'(x, \lambda) \varphi(x, \lambda_n)) \right|_0^{\pi} \\
  \lo{=} - \frac{\rho (\lambda - \lambda_n) f(\pi, \lambda) \varphi(\pi, \lambda_n)}{(H_1 - \lambda_n)(H_1 - \lambda)} - (h - \widetilde{h}) = - \frac{\rho (\lambda - \lambda_n) \psi(\pi, \lambda) \varphi(\pi, \lambda_n)}{(H_1 - \lambda_n)(H_1 - \lambda)} \\
  + \frac{\rho (\lambda - \lambda_n) \varphi(\pi, \lambda) \varphi(\pi, \lambda_n)}{(H_1 - \lambda_n)(H_1 - \lambda)} \frac{\Psi(\lambda)}{\Phi(\lambda)} - (h - \widetilde{h}).
\end{eqnarray*}
On the other hand
\begin{eqnarray*}
  \fl (\lambda - \lambda_n) \int_0^{\pi} f(x, \lambda) \varphi(x, \lambda_n) \rmd x = (\lambda - \lambda_n) \int_0^{\pi} \psi(x, \lambda) \varphi(x, \lambda_n) \rmd x \\
  - \frac{(\lambda - \lambda_n) \Psi(\lambda)}{\Phi(\lambda)} \int_0^{\pi} \varphi(x, \lambda) \varphi(x, \lambda_n) \rmd x.
\end{eqnarray*}
When $\lambda_n \to \lambda$ from these two equalities we obtain:
\begin{equation*}
  \gamma_n = (h - \widetilde{h}) \frac{\dot{\Phi}(\lambda_n)}{\Psi(\lambda_n)}.
\end{equation*}
Thus we expressed norming constants by two spectra. We shall use this expression to solve the problem of reconstruction of differential operator by two of its spectra.
Using Theorem~\ref{thm:asymptotics} and Lemma~\ref{lem:Marchenko} from the last equality we obtain the following asymptotic estimation:
\begin{equation*}
  \sqrt{\lambda_n} - \sqrt{\mu_n} := \frac{h - \widetilde{h}}{(n - 1) \pi} + \frac{\zeta_n}{n^2}, \quad \{\zeta_n\} \in l_2.
\end{equation*}

\section{Reconstruction by two spectra}
\label{sec:reconstruction2}

Let two sequences of real numbers $\{\lambda_n\}$ and $\{\mu_n\}$ ($n \in \mathbb{Z}_{+}$) with the following properties be given:
\begin{equation}
\label{eq:reconstruction2:asymptotics}
  \sqrt{\lambda_n} = n - 1 + \frac{\omega}{n \pi} + \frac{\zeta_n}{n}, \quad \{\zeta_n\} \in l_2,
\end{equation}
\begin{equation}
\label{eq:reconstruction2:asymptotics2}
  \sqrt{\mu_n} - \sqrt{\lambda_n} = \frac{\sigma}{n \pi} + \frac{\zeta'_n}{n^2}, \quad \sigma > 0, \quad \{\zeta'_n\} \in l_2,
\end{equation}
\begin{equation*}
  \lambda_0 < \mu_0 < \lambda_1 < \mu_1 < \lambda_2 < \mu_2 < \dots.
\end{equation*}
We define functions
\begin{equation*}
  \Phi(\lambda) := -\pi(\lambda-\lambda_0)(\lambda-\lambda_1) \prod_{n=2}^{\infty} \frac{\lambda_n - \lambda}{(n-1)^2},
\end{equation*}
\begin{equation*}
  \Psi(\lambda) := -\pi(\lambda-\mu_0)(\lambda-\mu_1) \prod_{n=2}^{\infty} \frac{\mu_n - \lambda}{(n-1)^2},
\end{equation*}
\begin{equation*}
  m(\lambda) := -\frac{\Psi(\lambda)}{\Phi(\lambda)}
\end{equation*}
and put:
\begin{equation*}
  \gamma_n := - \sigma \frac{\dot{\Phi}(\lambda_n)}{\Psi(\lambda_n)}.
\end{equation*}

Since zeros of $\Phi(\lambda)$ are simple, we get:
\begin{equation*}
  \Res_{\lambda=\lambda_n} m(\lambda) = -\frac{\Psi(\lambda_n)}{\dot{\Phi}(\lambda_n)} = \frac{\sigma}{\gamma_n}.
\end{equation*}
Lemma~\ref{lem:Marchenko} implies: $\lim_{\lambda \to -\infty} m(\lambda) = -1$. Hence
\begin{equation*}
  m(\lambda) = -1 + \sum_{n=0}^{\infty} \frac{\sigma}{\gamma_n (\lambda-\lambda_n)}.
\end{equation*}
Using Lemma~\ref{lem:Marchenko} again we calculate:
\begin{equation*}
  \dot{\Phi}(\lambda_n) = (-1)^n (n-1)^2 \left( \frac{\pi}{2} + \frac{\xi_n}{n} \right), \quad \{\xi_n\} \in l_2,
\end{equation*}
\begin{equation*}
  \Psi(\lambda_n) = (-1)^{n+1} (n-1)^2 \left( \sigma + \frac{\xi'_n}{n} \right), \quad \{\xi'_n\} \in l_2,
\end{equation*}
\begin{equation*}
  \gamma_n = \frac{\pi}{2} + \frac{\zeta_n}{n}, \quad \{\zeta_n\} \in l_2.
\end{equation*}

According to Theorem~\ref{thm:main_theorem} we can uniquely construct such a real-valued function $q(x) \in \mathscr{L}_2(0,\pi)$ and numbers $h, H, H_1, H_2 \in \mathbb{R}$ that the numbers $\{\lambda_n\}$ and $\{\gamma_n\}$ are the eigenvalues and the norming constants of problem $\mathcal{P}(q,h,H,H_1,H_2)$. We put:
\begin{equation*}
  \widetilde{h} := h + \sigma
\end{equation*}
and denote by $\tau_n$ the eigenvalues of problem $\mathcal{P}(q,\widetilde{h},H,H_1,H_2)$. Let $\varphi(x, \lambda)$ and $\psi(x, \lambda)$ be the solutions of equation~(\ref{eq:reconstruction2:main}) satisfying the conditions
\begin{equation*}
  \varphi(0, \lambda) = 1, \quad \varphi'(0, \lambda) = h, \qquad \psi(0, \lambda) = 1, \quad \psi'(0, \lambda) = \widetilde{h}.
\end{equation*}
Then zeros and poles of the function
\begin{equation*}
  \widetilde{m}(\lambda) := -\frac{\lambda (\psi'(\pi, \lambda) + H \psi(\pi, \lambda)) - H_1 \psi'(\pi, \lambda) - H_2 \psi(\pi, \lambda)}{\lambda (\varphi'(\pi, \lambda) + H \varphi(\pi, \lambda)) - H_1 \varphi'(\pi, \lambda) - H_2 \varphi(\pi, \lambda)}
\end{equation*}
coincide with $\{\lambda_n\}$ and $\{\tau_n\}$, respectively. The numbers $\{\lambda_n\}$ and $\{\tau_n\}$ interlace, $\lim_{\lambda \to -\infty} \widetilde{m}(\lambda) = -1$ and
\begin{equation*}
  \Res_{\lambda=\lambda_n} \widetilde{m}(\lambda) = \frac{\widetilde{h} - h}{\gamma_n} = \frac{\sigma}{\gamma_n}.
\end{equation*}
Using residue calculus we obtain:
\begin{equation*}
  \widetilde{m}(\lambda) = -1 + \sum_{n=0}^{\infty} \frac{\sigma}{\gamma_n (\lambda-\lambda_n)},
\end{equation*}
i.e. $\widetilde{m}(\lambda) \equiv m(\lambda)$ and therefore zeros of $\widetilde{m}(\lambda)$ and $m(\lambda)$ coincide: $\tau_n = \mu_n$. Thus we have proved
\begin{thm} \label{thm:main_theorem2}
  For the sequences $\{\lambda_n\}$ and $\{\mu_n\}$ $(n \in \mathbb{Z}_{+})$ to be the spectra of two problems of the type~(\ref{eq:main})--(\ref{eq:boundary2}) with common parameter dependent boundary condition it is necessary and sufficient to satisfy conditions~(\ref{eq:reconstruction2:asymptotics})--(\ref{eq:reconstruction2:asymptotics2}) and interlace.
\end{thm}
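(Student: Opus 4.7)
The plan is to verify necessity and sufficiency separately, with sufficiency carrying almost all of the weight. For necessity, essentially everything has already been done in Section~\ref{sec:two_spectra}: the interlacing $\lambda_0 < \mu_0 < \lambda_1 < \mu_1 < \dots$ follows from monotonicity of the Weyl--type function $m(\lambda) = -\Psi(\lambda)/\Phi(\lambda)$ on each interval between consecutive $\lambda_n$, the asymptotic~(\ref{eq:reconstruction2:asymptotics}) is Theorem~\ref{thm:asymptotics} applied to the first problem, and~(\ref{eq:reconstruction2:asymptotics2}) is the estimate derived at the very end of Section~\ref{sec:two_spectra} (with $\sigma = \widetilde{h} - h > 0$). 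So for necessity I would merely collect these facts.

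For sufficiency I would begin with data $\{\lambda_n\}$, $\{\mu_n\}$ satisfying the hypotheses, build $\Phi(\lambda)$, $\Psi(\lambda)$, $m(\lambda)$ by the Hadamard-type product formulas of Section~\ref{sec:reconstruction2}, and then define the candidate norming constants $\gamma_n := -\sigma\dot\Phi(\lambda_n)/\Psi(\lambda_n)$. The first real step is to check that these $\gamma_n$ satisfy~(\ref{eq:spectral_data1})--(\ref{eq:spectral_data2}): positivity $\gamma_n > 0$ follows from the strict interlacing (adjacent factors in $\Psi$ and $\dot\Phi$ give matching signs), and the asymptotic $\gamma_n = \pi/2 + \zeta_n/n$ with $\{\zeta_n\}\in\ell_2$ is a direct consequence of Lemma~\ref{lem:Marchenko} applied to $\Phi$ and $\Psi$, as already recorded in the excerpt. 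Once these are in hand, Theorem~\ref{thm:main_theorem} supplies a problem $\mathcal{P}(q,h,H,H_1,H_2)$ whose spectral data are exactly $\{\lambda_n,\gamma_n\}$.

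The remaining and genuinely substantive step is to show that the \emph{second} spectrum of the theorem is recovered, i.e.\ that the problem $\mathcal{P}(q,\widetilde h,H,H_1,H_2)$ with $\widetilde h := h+\sigma$ has eigenvalues exactly $\{\mu_n\}$. My plan is to form the Weyl-type function $\widetilde m(\lambda)$ of the second problem (as in Section~\ref{sec:two_spectra}) and compare it to the combinatorially constructed $m(\lambda)$. Three ingredients are needed: (i) $\lim_{\lambda\to-\infty}m(\lambda) = \lim_{\lambda\to-\infty}\widetilde m(\lambda) = -1$, obtained from Lemma~\ref{lem:Marchenko}; (ii) both functions are meromorphic with simple poles exactly at $\{\lambda_n\}$; (iii) their residues at $\lambda_n$ coincide, namely $\sigma/\gamma_n$. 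For $m$ this residue is immediate from the definition of $\gamma_n$; for $\widetilde m$ it is the Green's-formula identity derived in Section~\ref{sec:two_spectra}, which gives residue $(\widetilde h-h)/\gamma_n = \sigma/\gamma_n$. The partial-fraction expansion of $m-\widetilde m$ then identifies the two functions, whence they have the same zeros and $\tau_n = \mu_n$.

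The part I expect to require the most care is ingredient (iii), and in particular the justification that the partial-fraction series $-1 + \sum_n \sigma/(\gamma_n(\lambda-\lambda_n))$ truly represents both $m$ and $\widetilde m$ globally (not merely has matching poles and residues). This needs a uniform bound of the form $|m(\lambda)|,|\widetilde m(\lambda)| = O(1)$ on the contours $|\lambda|=(N-\tfrac12)^2$, which follows from the same lower bound on $|\Phi|$ and $|\chi|$ used in the proof of Lemma~\ref{lem:zero_sum}, combined with the $O(|s|^2 e^{|\operatorname{Im}s\pi|})$ asymptotic on the numerators; once this estimate is in place, a contour-integral argument closes the gap between "same principal parts" and "identically equal" and finishes the proof.
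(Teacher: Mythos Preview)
Your proposal is correct and follows essentially the same route as the paper: necessity is collected from Section~\ref{sec:two_spectra}, and sufficiency proceeds by defining $\gamma_n := -\sigma\dot\Phi(\lambda_n)/\Psi(\lambda_n)$, invoking Theorem~\ref{thm:main_theorem}, and then identifying $m$ with $\widetilde m$ via their common partial-fraction expansion $-1 + \sum_n \sigma/(\gamma_n(\lambda-\lambda_n))$. Your explicit check that $\gamma_n>0$ from interlacing, and your remark that the partial-fraction representation needs a contour estimate to pass from ``same principal parts'' to ``identically equal,'' are points the paper leaves implicit but which belong in a complete write-up.
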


\ack
  The author is grateful to Professor H.~M.~Huseynov for the problem statement and for useful discussions.

\section*{References}


\begin{thebibliography}{99}

\bibitem{ACTMH_102_2004_1-2_159-175}
Altinisik N, Kadakal M and Mukhtarov O Sh 2004 Eigenvalues and eigenfunctions of discontinuous Sturm--Liouville problems with eigenparameter-dependent boundary conditions {\it Acta Math. Hungar.} {\bf 102} 159--75

\bibitem{MATHN3_60_1996_4_456-458}
Ben Amara Zh 1996 Asymptotics of eigenvalues and eigenfunctions of the Sturm--Liouville problem with a small parameter and a spectral parameter in the boundary condition {\it Math. Notes} {\bf 60} 456--8

\bibitem{MATHN3_66_1999_1-2_127-134}
Ben Amara Zh and Shkalikov A A 1999 The Sturm--Liouville problem with physical and spectral parameters in the boundary condition {\it Math. Notes} {\bf 66} 127--34

\bibitem{MATMA_26_2003_4_349-357}
Binding P 2003 A hierarchy of Sturm--Liouville problems {\it Math. Methods Appl. Sci.} {\bf 26} 349--57

\bibitem{PRRSEA_125_1995_6_1205-1218}
Binding P A and Browne P J 1995 Application of two parameter eigencurves to Sturm--Liouville problems with eigenparameter-dependent boundary conditions {\it Proc. Roy. Soc. Edinburgh Sect. A} {\bf 125} 1205--18

\bibitem{PRRSEA_127_1997_6_1123-1136}
Binding P A and Browne P J 1997 Oscillation theory for indefinite Sturm--Liouville problems with eigenparameter-dependent boundary conditions {\it Proc. Roy. Soc. Edinburgh Sect. A} {\bf 127} 1123--36

\bibitem{PROEM2_37_1993_1_57-72}
Binding P A, Browne P J and Seddighi K 1993 Sturm--Liouville problems with eigenparameter dependent boundary conditions {\it Proc. Edinburgh Math. Soc. (2)} {\bf 37} 57--72

\bibitem{JLONM2_62_2000_1_161-182}
Binding P A, Browne P J and Watson B A 2000 Inverse spectral problems for Sturm--Liouville equations with eigenparameter dependent boundary conditions {\it J. London Math. Soc. (2)} {\bf 62} 161--82

\bibitem{BULLM_33_2001_6_749-757}
Binding P A, Browne P J and Watson B A 2001 Transformations between Sturm--Liouville problems with eigenvalue dependent and independent boundary conditions {\it Bull. London Math. Soc.} {\bf 33} 749--57

\bibitem{INVPR_13_1997__1453-1462}
Browne P J and Sleeman B D 1997 A uniqueness theorem for inverse eigenparameter dependent Sturm--Liouville problems {\it Inverse Problems} {\bf 13} 1453--62

%\bibitem{Freiling2001}

\bibitem{PRRSEA_77_1977__293-308}
Fulton C T 1977 Two-point boundary value problems with eigenvalue parameter contained in the boundary conditions {\it Proc. Roy. Soc. Edinburgh Sect. A} {\bf 77} 293--308

\bibitem{PRRSEA_87_1980__1-34}
Fulton C T 1980 Singular eigenvalue problems with eigenvalue parameter contained in the boundary conditions {\it Proc. Roy. Soc. Edinburgh Sect. A} {\bf 87} 1--34

\bibitem{AMEMS2_1_1955__253-304}
Gel'fand I M and Levitan B M 1955 On the determination of a differential equation from its spectral function {\it Amer. Math. Soc. Transl. Ser. 2} {\bf 1} 253--304

\bibitem{DIFEQ_35_1999_8_1031-1034}
Kapustin N Yu 1999 Oscillation properties of solutions to a nonselfadjoint spectral problem with spectral parameter in the boundary condition {\it Differential Equations} {\bf 35} 1031--4

\bibitem{DIFEQ_33_1997_1_116-120}
Kapustin N Yu and Moiseev E I 1997 Spectral problems with the spectral parameter in the boundary condition {\it Differential Equations} {\bf 33} 116--20

\bibitem{DIFEQ_37_2001_12_1677-1683}
Kapustin N Yu and Moiseev E I 2001 A remark on the convergence problem for spectral expansions corresponding to a classical problem with spectral parameter in the boundary condition {\it Differential Equations} {\bf 37} 1677--83

\bibitem{SIBMJ_44_2003_5_813-816}
Kerimov N B and Mirzoev V S 2003 On the basis properties of one spectral problem with a spectral parameter in a boundary condition {\it Siberian Math. J.} {\bf 44} 813--6

\bibitem{Levin96}
Levin B Ya 1996 {\it Lectures on entire functions} (Providence, RI: Amer. Math. Soc.)

\bibitem{USPMN_19_1964_2(116)_3-63}
Levitan B M and Gasymov M G 1964 Determination of a differential equation by two of its spectra (Russian) {\it Uspekhi Mat. Nauk} {\bf 19} 3--63

\bibitem{IZVAN__1982_2_15-22}
Mamedov S G 1982 Determination of a second order differential equation with respect to two spectra with a spectral parameter entering into the boundary conditions (Russian) {\it Izv. Akad. Nauk Azerb. SSR, Ser. Fiz.-Tekh. Mat. Nauk} {\bf 3} 15--22

\bibitem{Marchenko77}
Marchenko V A 1977 {\it The Sturm--Liouville operators and their applications} (Russian)
(Kiev: Naukova Dumka) English transl.: (Basel: Birkh\"{a}user Verlag, 1986)

\bibitem{NUMFA_24_2003_1-2_85-105}
McCarthy C M and Rundell W 2003 Eigenparameter dependent inverse Sturm--Liouville problems {\it Numer. Funct. Anal. Optim.} {\bf 24} 85--105

\bibitem{MATHZ_133_1973__301-312}
Walter J 1973 Regular eigenvalue problems with eigenvalue parameter in the boundary condition {\it Math. Z.} {\bf 133} 301--12

\bibitem{CZEMJ_32(107)_1982__608-622}
Wray S D 1982 Absolutely convergent expansions associated with a boundary-value problem with the eigenvalue parameter contained in one boundary condition {\it Czechoslovak Math. J.} {\bf 32(107)} 608--22

\bibitem{Yurko2001}
Yurko V A 2001 {\it Inverse spectral problems and their applications} (Russian) (Saratov: Izd-vo Saratovskogo pedinstituta)
See also:
Freiling G and Yurko V A 2001 {\it Inverse Sturm--Liouville problems and their applications} (Huntington, NY: Nova Science Publishers)

\bibitem{Yurko2002}
Yurko V A 2002 {\it Method of spectral mappings in the inverse problem theory} (Utrecht: VSP)

\end{thebibliography}
\end{document}